\newtheorem{theorem}{Theorem}[section]
\newtheorem{lemma}[theorem]{Lemma}
\newtheorem{prop}[theorem]{Proposition}
\newtheorem{coro}[theorem]{Corollary}
\theoremstyle{definition}
\newtheorem{ex}{Example}
\theoremstyle{remark}
\newtheorem{remark}[theorem]{Remark}
\renewcommand{\hat}{\widehat}
\newcommand{\R}{\mathbb{R}}
\newcommand{\cP}{\mathcal{P}}
\newcommand{\N}{\mathbb{N}}
\newcommand{\Z}{\mathbb{Z}}
\newcommand{\T}{\mathbb{T}}
\newcommand{\U}{\mathbb{U}}
\newcommand{\bU}{\mathbf{U}}
\newcommand{\les}{\lesssim}
\newcommand{\less}{\lessapprox}
\newcommand{\eps}{\epsilon}
\newcommand{\la}{\lambda}
\newcommand{\inn}[1]{\langle  #1 \rangle}
\title{Weighted wave envelope estimates for the parabola}
\author[Jongchon Kim]{Jongchon Kim}
\address[Jongchon Kim]{Department of Mathematics, City University of Hong Kong, Hong Kong SAR}
\email{jongckim@cityu.edu.hk}
\author[Hyerim Ko]{Hyerim Ko}
\address[Hyerim Ko]{Department of Mathematics, and Institute of Pure and Applied Mathematics, Jeonbuk National University, Jeonju, 54896, Republic of Korea}
\email{kohr@jbnu.ac.kr}
\begin{document}
\keywords{square function estimates, wave envelope estimates, weighted estimates}
\subjclass[2020]{42B15, 42B25}

\begin{abstract}
In this paper, we extend the C\'ordoba--Fefferman square function estimate for the parabola to a weighted setting. Our weighted square function estimate is derived from a weighted wave envelope estimate for the parabola. The bounds are formulated in terms of families of multiscale tubes together with weight parameters that quantify the distribution of the weight. As an application, we obtain some weighted $L^p$-estimates for a class of Fourier multiplier operators and for solutions to free Schr\"{o}dinger equation.
\end{abstract}

\maketitle

\section{Introduction}
The paper is concerned with weighted square function estimates for the parabola and some of its applications. Let $\cP$ denote the truncated parabola 
\[\mathcal P=\{(t,t^2)\in \R^2: |t|\le 1\}\]
and $N_{R^{-1}} \cP = \{(t,t^2+\eta)\in \R^2: |t|\le 1,\; |\eta| \leq R^{-1} \}$ denote its $R^{-1}$-neighborhood for a large $R\geq 1$.  We consider the canonical covering of $N_{R^{-1}} \cP$ by finitely overlapping parallelograms $\theta$ of dimensions $R^{-1/2}\times R^{-1}$. Given a function $f$ whose Fourier transform is supported on $N_{R^{-1}} \cP$, we decompose $f=\sum_\theta f_\theta$, where $\hat f_\theta$ is supported on $\theta$. 
This can be done, for example, by using a smooth partition of unity subordinate to a covering of the interval $[-1,1]$ by finitely overlapping intervals of length $\sim R^{-1/2}$ (see e.g. the proof of \Cref{thm:BR}).

By Plancherel's theorem, these functions $\{ f_\theta\}$ are orthogonal on $L^2(\R^2)$: $\| f\|_{L^2}^2 \leq C \sum_{\theta} \| f_\theta \|_{L^2}^2$. Moreover, the family  exhibits certain $L^p$ orthogonality due to the curvature properties of the parabola for some $p$ larger than $2$. For instance, the classical square function estimate for the parabola (see \cite{Co}) states that 
\begin{equation}\label{eqn:Fef}
 \|f\|_{L^4(\R^2)} \leq C \big\| \big(\sum_\theta |f_\theta|^2 \big)^{1/2}\big\|_{L^4(\R^2)}.
\end{equation}
This inequality relies on the geometric observation by Fefferman \cite{Fe} that the algebraic sums $\theta+ \theta'$ overlap only finitely often as $\theta$ and $\theta'$ vary. See also \cite{GGPRY,Mal, GM2} for extensions to non-degenerate curves in higher dimensions. Square function estimates of the form \eqref{eqn:Fef} have several important applications in harmonic analysis. The sharp square function estimate \eqref{eqn:Fef} is known to imply sharp results for the Kakeya maximal function, the Bochner-Riesz multipliers,  the Fourier restriction operator, and local smoothing estimates for the Schr\"odinger equation; see \cite{Carbery, Yung} and references therein. For the paraboloid in higher dimensions, it is conjectured that \eqref{eqn:Fef} holds with $L^4(\R^2)$ replaced by $L^{\frac{2d}{d-1}}(\R^d)$, which remains wide open. 

Let $H: \R^2 \to [0,\infty)$ be a bounded function on $\R^2$. The main goal of this paper is to establish weighted square function estimates of the form
\[ 
	\|f\|_{L^p(Hdx)} \le C_{p,H}(R) \big\| \big(\sum_\theta |f_\theta|^2 \big)^{1/2}\big\|_{L^p(\R^2)}
\]
for $2\leq p\leq 4$ and to explore some of its consequences. To describe the constant $C_{p,H}(R)$, we need to introduce a family of tubes originating from a multiscale analysis. Let $s$ be a dyadic number in the range $R^{-1/2}\le s\le1$. At each scale $s$, we cover $N_{s^2} \cP$  by canonical blocks $\{\tau \}$ of dimension $s\times s^{2}$ and use $|\tau|=s$ to denote the scale. For the smallest scale $s=R^{-1/2}$, these blocks are just $\{ \theta \}$. 

We fix a dyadic $s\in [R^{-1/2},1]$. For each $\tau$ with $|\tau|=s$, we consider a linear transform $L_\tau$ determined by the parabolic rescaling (see \eqref{eqn:Ltau}) for which $L_\tau ( [-\frac1 2,\frac 1 2]^2 )$ is a parallelepiped dual to $\tau$ of dimensions $s^{-1} \times s^{-2}$ and orthogonal to $\tau$. Let $\T_{\tau}$ denote the tiling of $\R^2$ by  translates of the dual parallelepiped: 
\begin{equation}\label{set of tube}
		\T_{\tau} = \{ L_\tau(z+q): z\in \Z^2 \}, \;\; q=[-1/2,1/2]^2.
\end{equation}
Next, we consider the tiling of $\R^2$ by the dilated family of tubes
\[
\U_{\tau} = \{ Rs^2 \cdot T : T\in \T_\tau \}.
\]
Each $U\in \U_{\tau}$ is thus a parallelepiped of dimensions $Rs \times R$. 
Let $\U$ denote the union of $\U_{\tau}$ for all $\tau$ ranging over all dyadic scales $R^{-1/2}\leq s \leq 1$. For a given $U\in \U$, we let $\tau(U)$ denote the $\tau$ such that $U\in \U_{\tau}$.

\begin{figure}\label{tubes}

\begin{tikzpicture}[scale=0.7]
	\draw (-9,4) -- (-7.5,4.5) -- (-7.5,5) -- (-9,4.5) -- cycle;
	\node at (-8.2, 4.5) {$\tau$};
	\node at (-8.2, 3.9) {$s$};
	\node at (-7.1, 4.9) {$s^{2}$};
	\draw (-4,6) -- (-5,6) -- (-6,9) -- (-5,9) -- cycle;
	\node at (-5, 7.5) {$T$};
	\node at (-4.3, 5.65) {$s^{-1}$};
	\node at (-3.9, 7.6) {$s^{-2}$};
	\draw (0.05,0) -- (-3,0) -- (-6,9) -- (-3,9) -- cycle;
	\node at (-2.8, 4.5) {$U$};
	\node at (-1.5, -0.4) {$Rs$};
	\node at (-1, 4.5) {$R$};
\end{tikzpicture}
	\caption{ $U\in \U_\tau$, $T\in \T_\tau$ such that $T\subset U$ for $|\tau|=s$.}
\end{figure}
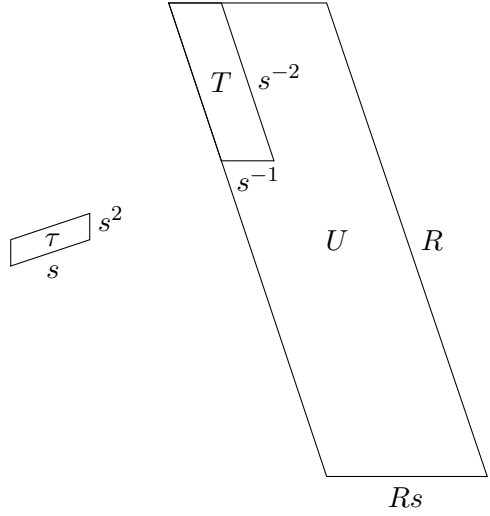

Given $U\in \U$, we define
\begin{align}\label{kappa_def}
\kappa_{p,H}(U) =   \max_{\substack{T \in \T_{\tau(U)}:\\ T\subset U}} \left(\frac{H(T)}{|T|}\right)^{\frac{1}{4}}\left(\frac{H(U)}{|U|}\right)^{\frac{1}{p}-\frac{1}{4}},
\end{align}
where we write $H(E) := \int_{E} H$ for a measurable set $E\subset \R^2$.
We are now ready to state our weighted square function estimates. 
\begin{theorem}\label{cor:sqweighted}
	Let $2\leq p\leq 4$ and $H:\R^2 \to [0,1]$ be a   function. For any function $f$ whose Fourier transform is supported on $N_{R^{-1}} (\cP)$,  we have, 
	\begin{align}\label{sqThm}
		\|f\|_{L^p(Hd x)} \less  \big(\max_{U\in \U} \kappa_{p,H}(U) +R^{-100} \big) \big\| \big(\sum_\theta |f_\theta|^2 \big)^{1/2}\big\|_{L^p(\R^2)}.
	\end{align}
\end{theorem}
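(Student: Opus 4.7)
My plan is to deduce \Cref{cor:sqweighted} from a weighted wave envelope estimate for the parabola, which should be the main technical result of the paper. Such an estimate is expected to take the form
\[
  \int |f|^p H\,dx \;\lesssim_\varepsilon\; R^\varepsilon \sum_{U \in \U} \kappa_{p,H}(U)^p\, W_U(f),
\]
where $W_U(f)$ is a wave-envelope quantity built from $L^2$ masses of the pieces of $f$ aligned with the orientation of $\tau(U)$ and localized to $U$. Granting this, \eqref{sqThm} follows by pulling the factor $\max_{U} \kappa_{p,H}(U)$ outside the sum and bounding the remaining unweighted sum $\sum_U W_U(f)$ by $\|(\sum_\theta |f_\theta|^2)^{1/2}\|_{L^p}^p$; the latter is essentially what the wave envelope estimate with $H \equiv 1$ yields, combined with Fefferman's inequality \eqref{eqn:Fef}. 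The $R^{-100}$ correction in \eqref{sqThm} absorbs the $R^\varepsilon$ loss via Chebyshev/pigeonholing.

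A sanity check at the endpoints of $p \in [2,4]$ is reassuring. At $p=2$, Plancherel together with $H \leq 1$ gives $\|f\|_{L^2(H)}^2 \leq \sum_\theta \|f_\theta\|_2^2$, consistent with $\max_U \kappa_{2,H}(U) \leq 1$ whenever $H \leq 1$. At $p=4$, Fefferman's inequality \eqref{eqn:Fef} and $H \leq 1$ yield the bound with constant $\lesssim 1$, consistent with $\max_U \kappa_{4,H}(U) \leq 1$. The substance of \Cref{cor:sqweighted} therefore lies in the quantitative refinement when $H$ is concentrated on a small fraction of the multiscale tubes, so that $\kappa_{p,H}(U)$ is uniformly much smaller than $1$.

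The main obstacle is the weighted wave envelope estimate itself, and specifically the two-scale structure of $\kappa_{p,H}(U) = (H(T)/|T|)^{1/4}(H(U)/|U|)^{1/p - 1/4}$. Rewriting $\kappa_{p,H}(U)^p = (H(T)/|T|)^{p/4}(H(U)/|U|)^{1-p/4}$ reveals it as a geometric interpolation between a density at the wave-packet scale $T$ (exponent $p/4$, dominant at $p=4$) and a density at the tube scale $U$ (exponent $1-p/4$, dominant at $p=2$). I therefore expect the proof to interpolate, via H\"older's inequality with conjugate exponents $4/p$ and $4/(4-p)$, between a weighted Fefferman-type bound at the $T$-scale (producing the factor $(H(T)/|T|)^{1/4}$ through the geometric observation that the $\theta-\theta'$ overlap finitely often) and a locally-averaged $L^2$-type bound at the $U$-scale (producing the factor $(H(U)/|U|)^{1/p-1/4}$ by averaging $H$ over $U$ using the locally constant property of $f_\tau$ at the dual scale). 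Running this interpolation cleanly through an induction on scales over the multiscale family $\U$, while preserving the orthogonality of the $f_\tau$'s and reconciling the weight averages at the different scales, is where I expect the technical heart of the argument to reside.
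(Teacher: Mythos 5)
Your top-level architecture is the paper's: \Cref{cor:sqweighted} is indeed deduced from the weighted wave envelope estimate \eqref{eqn:envelope} by pulling $\max_{U\in\U}\kappa_{p,H}(U)^p$ out of the sum, and the H\"older step you describe (producing the factor $(H(U)/|U|)^{1/p-1/4}$ from an $L^4$ bound on $U\cap Y$) is exactly how the $U$-scale part of $\kappa_{p,H}(U)$ arises in the paper. However, two of your specific claims in the deduction are off. First, the remaining unweighted sum is not controlled by ``the $H\equiv 1$ wave envelope estimate combined with Fefferman's inequality'' --- that would be circular and is unnecessary. The paper's bound is elementary: H\"older converts $|U|^{1-\frac p2}\|(\sum_{\theta\subset\tau}|f_\theta|^2)^{1/2}\|_{L^2(w_U)}^p$ into $\|(\sum_{\theta\subset\tau}|f_\theta|^2)^{1/2}\|_{L^p(w_U)}^p$, the tiling $\U_\tau$ of $\R^2$ collapses the sum over $U$, the embedding $\ell^2\subset\ell^p$ removes the sum over $\tau$ at each scale, and the $O(\log R)$ scales are absorbed into $\less$. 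Second, the $R^{-100}$ term has nothing to do with absorbing an $R^\varepsilon$ loss via Chebyshev: \Cref{thm:sqweighted} is only stated (and only provable by dyadically pigeonholing $H$ into $O(\log R)$ levels) for weights valued in $\{0\}\cup[R^{-400},1]$, so one splits $H=H_1+H_2$ with $0\le H_1\le R^{-400}$ and treats $H_1$ by the unweighted estimate; that is the sole source of $R^{-100}$.

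The more serious gap is in your sketch of the wave envelope estimate itself. You propose to obtain the $(H(T)/|T|)^{1/4}$ factor from ``a weighted Fefferman-type bound at the $T$-scale \dots through the geometric observation that the $\theta-\theta'$ overlap finitely often.'' This is precisely the approach the paper declares inapplicable: the overlap argument hinges on the even exponent $4$ and Plancherel and does not survive the weight or the multiscale localization. The paper instead runs the Bourgain--Guth broad--narrow decomposition (\Cref{ptwise bound}) to reduce to transverse bilinear pieces $|f_{\tau_1}f_{\tau_2}|^{1/2}$ with $\tau_1,\tau_2\subset\tau$ separated, applies parabolic rescaling and the local bilinear restriction estimate (\Cref{thm:bil}) on each $U\in\U_\tau$, and extracts $\max_{T\subset U}H(T)/|T|$ from the local constancy of $|g_{\tau_1}g_{\tau_2}|^2$ on unit cubes after rescaling (equivalently, on the tubes $T\in\T_\tau$ before rescaling). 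Without this bilinear input your plan has no route to \eqref{eqn:envelope}, so as written the proposal does not amount to a proof of \eqref{sqThm}.
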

Here we mean by $A \less B$  an inequality of the form $A \leq C_{\eps} R^\eps B$ for any $\eps>0$. The term   $R^{-100}$ on the right-hand side of \eqref{sqThm} is negligible for  interesting weights $H$. For instance, we have $\max_{U\in \U} \kappa_{p,H}(U) \gg R^{-100}$ whenever $H$ is the characteristic function of a union of unit balls. 
We also use the notation $A \les B$ to denote $A\leq C B$ with an absolute constant  $C>0$,
possibly depending on parameters such as $p$ and $\alpha$.

\begin{remark}
  \Cref{cor:sqweighted} (and \Cref{thm:sqweighted} to be stated) remains valid when the parabola $\cP$ is replaced by a small perturbation of $\cP$ for which the bilinear restriction estimate (see \Cref{thm:bil}) is valid. In particular, it holds for any function whose Fourier transform is supported in a small neighborhood of the unit circle under a corresponding modification in the collections $\T_\tau$ and $\U_\tau$ outlined before the statement of \Cref{thm:BR} below. In addition, \eqref{sqThm} holds for all non-negative $H\in L^\infty(\R^2)$ with $R^{-100}$  replaced by $R^{-100}  \| H\|_{L^\infty}^{1/p}$ by homogeneity and the fact that $\kappa_{p,cH}(U) = c^{1/p}\kappa_{p,H}(U)$ for any $U\in \U$ and constant $c>0$. 
\end{remark}

For $H\equiv 1$, we have $\kappa_{p,H}(U)=1$ for any $U\in \U$. Thus, when $p=4$,  \Cref{cor:sqweighted} essentially recovers   the classical square function estimate \eqref{eqn:Fef}. For $2\leq p<4$, the $H\equiv 1$ case of \Cref{cor:sqweighted}  recovers  square function estimates due to Gan \cite{Gan}, where more general small cap square function estimates are established. The case $p=2$ of \Cref{cor:sqweighted} is essentially contained in \cite{CarberySeeger}; this was kindly pointed out to us by Tony Carbery (see also \cite[pages 17--19]{CarberySlide}).
Our weighted square function estimates are inspired by weighted decoupling inequalities for the paraboloids, which have been extensively studied in recent years and applied to problems such as the Falconer distance set conjecture and Bochner-Riesz means; see, e.g., \cite{GIOW, DORZ, GanWu, Kim} and references therein. 

We compute the constant $\max_{U\in \U} \kappa_{p,H}(U)$ for $\alpha$-dimensional weights.
\begin{ex}[$\alpha$-dimensional weights]\label{ex_alpha}
	Let $0\leq \alpha \leq 2$. Suppose that $H:\R^2 \to [0,1]$ is  $\alpha$-dimensional in the sense that 
	\begin{align*}
		\inn{H}_\alpha:=\sup_{(z,\rho)\in \R^2 \times [1,\infty]} \rho^{-\alpha} H(B_\rho(z)) \les 1.
	\end{align*}
Here $B_\rho(z)$ denotes the ball of radius $\rho$ centered at $z$ (and we simply write $B_\rho$ when centered at the origin).
Then
	\begin{equation}\label{eqn:kappaAlpha}
		\max_{U\in \U} \kappa_{p,H}(U)
	\;\les\;  R^{-(2-\alpha)\left(\frac{1}{p}-\frac{1}{4}\right)}.
	\end{equation}
To see this, let  $T\in \T_\tau$ and $U\in \U_\tau$ for some $|\tau|=s$. Since $T$ and $U$ are covered by $O(s^{-1})$ balls of radius $s^{-1}$ and $Rs$, respectively, we have 
\begin{align*}
  H(T) &\lesssim s^{-1} s^{-\alpha},\\
  H(U) &\lesssim s^{-1} (Rs)^{\alpha}.
\end{align*}
On the other hand, $|T|\sim s^{-3}$ and $|U|\sim R^2 s$. Therefore,
	\begin{equation*}
		\max_{U\in \U} \kappa_{p,H}(U) 
		\;\les\; \max_{R^{-1/2}\le s\le1} (s^{2-\alpha})^{\frac 1 4} \big( (Rs)^{-(2-\alpha)}\big)^{\frac1 p - \frac 1 4},
	\end{equation*}
and the maximum is attained at the scale $s=1$, which yields \eqref{eqn:kappaAlpha}.
\end{ex}

\begin{ex}[Unit ball]\label{ex_ball}
We examine the sharpness of \Cref{cor:sqweighted} for the weight $H=1_{B_1}$.
Since $H$ is $\alpha$-dimensional for every $\alpha\in [0,2]$, \Cref{cor:sqweighted} together with \eqref{eqn:kappaAlpha} shows that
\begin{align*}
\| f\|_{L^p(B_1)} \less R^{-2(\frac 1 p-\frac 1 4)}  \big\| \big(\sum_\theta |f_\theta|^2 \big)^{1/2}\big\|_{L^p(\R^2)}.
\end{align*}

This estimate is essentially sharp for all $2\leq p \leq 4$. Indeed, let $\hat{f_\theta}$ be an $L^1$-normalized smooth bump function supported on $\theta$. In this case,
$|\sum_\theta f_\theta(x)| \gtrsim \# \{ \theta\}$ for $x\in B_c$ for a sufficiently small $c>0$ and $|f_\theta|$ decays rapidly away from the tube $\theta^*$ dual to $\theta$ centered at the origin, implying
\begin{align*} 
\|f\|_{L^p(B_1)} \gtrsim R^{\frac 1 2}  \; \text{ and } \; 
\big\| \big(\sum_\theta |f_\theta|^2 \big)^{1/2}\big\|_{L^p(\R^2)} \sim R^{\frac2 p}.
\end{align*}
\end{ex}

\begin{remark}\label{remark:lowerBoundSquare}
Fix $\alpha\in [1,2]$. Let $\sigma$ denote the infimum of exponents for which the bound
\[ 
\|f\|_{L^p(Hdx)} \less R^\sigma \big\| \big(\sum_\theta |f_\theta|^2 \big)^{\frac12} \big\|_{L^p(\R^2)}
\]
holds for any weight $H$ with $\langle H\rangle_\alpha\le1$.  \Cref{cor:sqweighted} and \eqref{eqn:kappaAlpha}  yield the upper bound 
\begin{equation}\label{eqn:upperSigma}
    \sigma \le -(2-\alpha)\left(\frac{1}{p}-\frac{1}{4}\right).
\end{equation}
When $p=2$ or $p=4$, the upper bound matches with the lower bound  
\begin{equation}\label{eqn:lowerSigma}
   \sigma \ge \max\Big(-2\big(\frac1p-\frac14\big),~-\frac{2-\alpha}{2p},~-(2-\alpha)\big(\frac{1}{p}-\frac{1}{6}\big) \Big).  
\end{equation}
The first lower bound follows from \Cref{ex_ball}. For the second lower bound, fix $\theta$ and take $f=f_\theta$ as in \Cref{ex_ball}, and set $H=R^{\frac{\alpha-2}2} 1_{\theta^*}$. A direct computation shows $\inn{H}_{\alpha}\les 1$, which yields the second lower bound. The third lower bound can be obtained by using a special solution to Schr\"odinger equation studied by Barcel\'o, Bennett, Carbery, Ruiz, and Vilela \cite{BBCRV0}; see \Cref{sec:BBCRV}.

On the other hand, there are gaps between \eqref{eqn:upperSigma} and \eqref{eqn:lowerSigma} for intermediate $2<p<4$. Nevertheless, we will show that, for each $\alpha \in (1,2)$, there exists an $\alpha$-dimensional weight for which \Cref{cor:sqweighted} gives sharp $L^p$ weighted square function estimates when $2\leq p\leq 4/(3-\alpha)$ or $p=4$; see \Cref{sec:exInterpolation}. 
\end{remark}

We present three consequences of \Cref{cor:sqweighted} in \Cref{sec:app}.\\[1ex]
$(i)$ Weighted $L^p$ bounds for Fourier multipliers supported on a small neighborhood of the unit circle.\\[1ex]
$(ii)$ Weighted and frequency–localized $L^p$ bounds for the one–dimensional Schr\"odinger propagator.\\[1ex]
$(iii)$ Local smoothing estimates for the Schr\"{o}dinger equation with respect to fractal measures 
satisfying parabolic or Euclidean ball conditions. \\[1ex]
In cases $(i)$ and $(ii)$, the dependence on the weight is quantified by $\max_{U\in\U}\kappa_{p,H}(U)$, which extends classical unweighted estimates.

\Cref{cor:sqweighted} is a consequence of a weighted $L^p$ wave envelope estimate for the parabola.
\begin{theorem}\label{thm:sqweighted}
	Let $2\leq p\leq 4$ and $H:\R^2 \to \{0\} \cup [R^{-400}, 1]$ be a weight. If $\widehat f$ is supported on $N_{R^{-1}} (\cP)$, we have  
\begin{equation}\label{eqn:envelope}
	\| f \|_{L^p(Hd x)}^p \less  \sum_{R^{-1/2} \leq s\leq 1} \sum_{|\tau|=s} \sum_{U\in \U_{\tau}} \kappa_{p,H}(U)^p |U|^{1-\frac{p}{2}} \big\| (\sum_{\theta \subset \tau} |f_{\theta}|^2 )^{1/2}  \big\|_{L^2(w_{U})}^p.
\end{equation}    
\end{theorem}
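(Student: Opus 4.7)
The plan is to prove the weighted wave envelope estimate~\eqref{eqn:envelope} by first establishing the $p=4$ case and then obtaining the full range $2\le p\le 4$ by H\"older interpolation with the trivial $L^2$ orthogonality applied envelope-by-envelope. The $p=4$ estimate, which is the heart of the argument, is proved via a broad-narrow decomposition, with Theorem~\ref{thm:bil} supplying the broad-case bound and parabolic rescaling handling the narrow case by induction on the radius $R$.

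First I would dyadically pigeonhole the weight: since $H$ takes values in $\{0\}\cup[R^{-400},1]$, for each scale $s\in[R^{-1/2},1]$ the densities $H(T)/|T|$ (with $T\in\T_\tau$) and $H(U)/|U|$ (with $U\in\U_\tau$) attain only $O(\log R)$ relevant dyadic values. Restricting to the corresponding level sets costs at most $(\log R)^{O(1)}$, which is absorbed into $\less$. Next, at a scale $K^{-1}$ with $K$ a small power of $R^{\eps}$, I would carry out a broad-narrow decomposition. At \emph{narrow} points $|f(x)|$ is essentially controlled by $|f_\tau(x)|$ for a single subcap $\tau$ of width $K^{-1}$, and I apply parabolic rescaling $L_\tau^{-1}$, which identifies $f_\tau$ with a function whose Fourier transform is supported near $\cP$ at the smaller radius $RK^{-2}$ with pulled-back weight $H\circ L_\tau$; the inductive hypothesis at this smaller radius then supplies the wave envelope bound for scales $s<K^{-1}$. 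At \emph{broad} points one has $|f(x)|^{2}\les K^{O(1)}|f_{\tau_1}(x)f_{\tau_2}(x)|$ for two separated subcaps $\tau_1,\tau_2$ of size $K^{-1}$, and the Fourier support of $f_{\tau_1}f_{\tau_2}$ is a parallelogram dual to a tube $U\in\U_\tau$ (where $\tau$ is the smallest cap containing $\tau_1\cup\tau_2$), which is exactly how the multiscale tube family indexes \eqref{eqn:envelope}.

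The key technical step is to incorporate the weight into the bilinear restriction bound. On a pigeonholed level set one has $H\les (\max_{T\subset U}H(T)/|T|)\cdot \mathbf{1}_{\bigsqcup T}$ after suitable localization, and applying Theorem~\ref{thm:bil} together with a tube-adapted cutoff $w_U$ yields
\[
\int |f_{\tau_1}f_{\tau_2}|^2\,H \;\les\; \Big(\max_{T\subset U}\frac{H(T)}{|T|}\Big)\;\|f_{\tau_1}\|_{L^2(w_U)}^2\|f_{\tau_2}\|_{L^2(w_U)}^2.
\]
Summing over broad pairs and using the $L^2$ orthogonality of $\{f_\theta\}_{\theta\subset\tau_i}$ gives the $p=4$ version of \eqref{eqn:envelope} with $\kappa_{4,H}(U)^4=\max_T H(T)/|T|$. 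H\"older interpolation of this against the trivial $L^2$ bound applied on each parallelepiped $U$ then produces the extra factor $(H(U)/|U|)^{1/p-1/4}$ and the scaling $|U|^{1-p/2}$, yielding the general $\kappa_{p,H}(U)$ appearing in \eqref{eqn:envelope}.

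The main obstacle is the bookkeeping of $\kappa_{p,H}(U)$ across scales. Under parabolic rescaling $L_\tau^{-1}$, the families $\T_{\tau'}$ and $\U_{\tau'}$ for sub-caps $\tau'\subset\tau$ correspond to the analogous families for the rescaled parabola at the smaller radius, and one must verify that the $\kappa_{p,H\circ L_\tau}$ produced by the inductive bound aggregates correctly into the full sum $\sum_s\sum_{|\tau|=s}\sum_U$ in \eqref{eqn:envelope}. A further technical point is that the broad-narrow reduction is iterated $O(\log_K R)$ times, so $K$ must be chosen so that $K^{O(1)\log_K R}=R^{O(\eps)}$ remains absorbable into $(\log R)^{O(1)}$ via a standard $\eps$-removal.
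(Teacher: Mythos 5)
Your proposal is essentially sound and shares the paper's key ingredients: dyadic pigeonholing of $H$ down to a level set, a broad--narrow decomposition, the bilinear restriction theorem combined with the locally constant property to extract $\max_{T\subset U} H(T)/|T|$, and H\"older on each $U\cap Y$ to pass from $p=4$ to $2\le p\le 4$, which produces exactly the factor $(H(U)/|U|)^{1/p-1/4}$ and the normalization $|U|^{1-p/2}$. The one structural difference is your treatment of the narrow term. You propose a genuine induction on the radius: rescale $f_\tau$ by $L_\tau^{-1}$ and apply the hypothesis at radius $RK^{-2}$ to the pulled-back weight $H\circ L_\tau$. This can be made to work --- the ratios $H(T)/|T|$ and $H(U)/|U|$ are invariant under the affine change of variables, so $\kappa_{p,H\circ L_\tau}(U')=\kappa_{p,H}(L_\tau U')$ and the rescaled families do aggregate into the full sum --- but it forces you to carry the weight through every rescaling and, with $K=R^{\eps}$, to invoke an $\eps$-removal at the end. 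The paper instead proves a single \emph{pointwise} multiscale inequality (\Cref{ptwise bound}), obtained by iterating the Bourgain--Guth decomposition $m\sim\log R/\log K$ times with $K\sim\log R$, and only then integrates against the weight once; this sidesteps both the rescaling of $H$ in the narrow case and the $\eps$-removal, and parabolic rescaling is used only inside the bilinear step. Two small corrections to your write-up: your displayed bilinear inequality is missing the normalization $|U|^{-1}$ (present in \Cref{thm:bil} as the factor $|B|^{-1}$, and needed to produce $|U|^{1-p/2}$ at $p=4$); and the mechanism for inserting the weight is not literally $H\les(\max_T H(T)/|T|)\,\mathbf{1}_{\sqcup T}$ but the locally constant property of $|f_{\tau_1}f_{\tau_2}|^2$ on the dual boxes $T$, which lets one replace $\int_{T\cap Y}$ by $\frac{|T\cap Y|}{|T|}\int_{T}$ before applying the bilinear estimate.
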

Here $w_U$ denotes an $L^\infty$-normalized weight which decays rapidly away from $U$. 

The estimate \eqref{eqn:envelope} is sharp for the unit ball example in \Cref{ex_ball}, where the term $s=1$ on the right-hand side of \eqref{eqn:envelope} dominates. We present an example where $\|f\|_{L^p(Hdx)}^p$ essentially matches the contribution from $s = R^{-1/2}$.

\begin{ex}
Let $Y\subset \R^2$ and $H=1_Y$. Consider $f=\sum_{\theta} f_\theta$ such that $\{f_\theta\}$ have essentially disjoint supports on $Y$. Then
\begin{align*}
	\| f\|_{L^p(Y)}^p \sim \sum_{\theta} \| f_\theta \|_{L^p(Y)}^p = \sum_{\theta} \sum_{U\in \U_\theta} \| f_\theta\|_{L^p(U\cap Y)}^p.
\end{align*}
We further assume that $|f_\theta|$ is  essentially constant on each $U\in \U_\theta$, which is natural in view of the uncertainty principle. Then
\begin{equation}\label{eqn:exKnapp}
	\| f_\theta\|_{L^p(U\cap Y)}^p 
	\sim \frac{|U\cap Y|}{|U|} \| f_\theta\|_{L^p(U)}^p 
	\sim \frac{|U\cap Y|}{|U|} |U|^{1-\frac{p}{2}} \| f_\theta\|_{L^2(U)}^p.
\end{equation}
Hence,
\[ 
\|f\|_{L^p(Y)}^p \;\sim\;
\sum_{|\theta|=R^{-1/2}} \sum_{U\in \U_\theta}
\frac{|U\cap Y|}{|U|}\,|U|^{1-\frac{p}{2}}\,
\big\|f_\theta\big\|_{L^2(U)}^{p}.
\]

We note that $\U_{\theta} = \T_\theta$ forms an identical tiling of $\R^2$ by parallelepipeds of dimensions $R^{1/2}\times R$. Therefore, 
\begin{align}\label{eqn:22}
	\kappa_{p,H}(U)^p = \frac{|U\cap Y|}{|U|}, \;\; U\in \U_\theta.
\end{align}
Thus, $\|f \|_{L^p(Y)}^p$ is comparable to the $s=R^{-1/2}$ term on the right-hand side of \eqref{eqn:envelope}.
\end{ex}

Wave envelope estimates, namely estimates of the form \eqref{eqn:envelope} with $H\equiv 1$, were first developed in the breakthrough work of Guth, Wang, and Zhang \cite{GWZ} for the cone 
\[
\Gamma=\{\xi_1^2+\xi_2^2=\xi_3^2,~1/2\le \xi_3\le2\} \subset \R^3.
\]
Their wave envelope estimate for $\Gamma$ implies, among other consequences, the sharp $L^4$ square function estimate for $\Gamma$ and the sharp local smoothing estimate for the wave equation in 2+1 dimension.
The $p=4$ and $H\equiv 1$ case of \Cref{thm:sqweighted} recovers the $L^4$ wave envelope estimate \cite[Equation (8)]{GM1}, which is  implicit in \cite{GWZ}. 
We note that \cite{GM1} established more refined versions of the $L^4$ wave envelope estimates, termed amplitude-dependent wave envelope estimates, for both the parabola $\cP$ and the cone $\Gamma$.

For the proof of \Cref{thm:sqweighted}, the classical approach used to establish the square function estimate \eqref{eqn:Fef} is not applicable, as it relies critically on the even exponent $4$ and Plancherel’s theorem, neither of which extend to weighted settings or general exponents.  Instead, we adopt a more robust strategy used by Gan \cite{Gan} for proving small cap square function estimates. This method employs a multiscale bilinear reduction argument from \cite{BG, DGW}, together with the bilinear restriction theorem (see e.g. \cite{Tao}). One of  main contributions of the present paper is an extension of the method to the weighted setting that effectively exploits the presence of the weights without imposing any additional assumptions.

\subsection*{Organization of the paper}
In \Cref{sec:app}, we deduce \Cref{cor:sqweighted} from \Cref{thm:sqweighted} and discuss applications of \Cref{cor:sqweighted}. 
In \Cref{sec:MoreExamples}, we present additional examples related to the sharpness of \Cref{cor:sqweighted} for $\alpha$-dimensional weights.
In \Cref{sec:bil}, we establish \Cref{thm:sqweighted} using the bilinear restriction theorem. 
In \Cref{sec:ls}, we prove fractal local smoothing estimates to be stated in \Cref{sec:app}. 
In \Cref{examp}, we look at examples and derive necessary conditions for  \Cref{cor:SchFreqLocal}, \Cref{FLSS_par} and \Cref{FLSS}.
Finally, in \Cref{sec:pt}, we give a proof of \Cref{ptwise bound}, a multiscale broad-narrow decomposition.

\subsubsection*{Notations}
We summarize here the notations that will be used frequently throughout the paper.\\[3pt]
$\bullet$ We write $A \less B$ to denote an inequality of the form $A \leq C_{\eps} R^{\eps} B$ for any $\eps>0$ where $C_{\eps}$ is an absolute constant.\\[3pt]
$\bullet$ We denote by $B_\rho(z)$ the ball of radius $\rho$ centered at $z\in \R^2$ and we simply write $B_\rho$ when centered at the origin.\\[3pt]
$\bullet$ For a Borel measure $\mu$ on $\R^2$, we set $\displaystyle{\inn{\mu}_\alpha:=\sup_{(z,\rho)\in \R^2\times [1,\infty) } \rho^{-\alpha} \mu(B_\rho(z))}$ and define $[\mu]_\alpha$ similarly except that the supremum is taken over $\rho>0$. Analogous conventions apply for other related quantities.

\section{Proof of \Cref{cor:sqweighted} and some applications}\label{sec:app}
We begin by deriving \Cref{cor:sqweighted} from the weighted envelope estimate in \Cref{thm:sqweighted}. Then we turn to some applications of \Cref{cor:sqweighted}. 

\subsection{Weighted wave envelope estimates imply weighted square function estimates}
In this section, we prove that \Cref{thm:sqweighted} implies \Cref{cor:sqweighted}.

Let $2\leq p\leq 4$. We first verify \eqref{sqThm} for weights $H: \R^2 \to [R^{-400},1]$.
By H\"{o}lder's inequality, \Cref{thm:sqweighted} yields
\begin{equation*}
	\| f \|_{L^p(Hdx)}^p \less \sum_{R^{-1/2} \leq s\leq 1} \sum_{|\tau|=s} \sum_{U\in \U_{\tau}} \kappa_{p,H}(U)^p  \big\| \big(\sum_{\theta \subset \tau} |f_{\theta}|^2 \big)^{1/2}  \big\|_{L^p(w_{U})}^p.
\end{equation*}

After dominating $\kappa_{p,H}(U)^p$ by $\sup_{U\in \U} \kappa_{p,H}(U)^p$, we sum over all $U$. This yields, for each $s$, 
    \begin{align*}
        \sum_{|\tau|=s} \sum_{U\in \U_{\tau}} \big\| \big(\sum_{\theta \subset \tau} |f_{\theta}|^2 \big)^{1/2}  \big\|_{L^p(w_{U})}^p \les \sum_{|\tau|=s} \big\| \big(\sum_{\theta \subset \tau} |f_{\theta}|^2 \big)^{1/2}  \big\|_{L^p}^p \leq  \big\| \big(\sum_{\theta} |f_{\theta}|^2 \big)^{1/2}  \big\|_{L^p}^p.
    \end{align*}
For the last inequality, we use embedding $\ell^2\subset \ell^p$ for $p\ge2$.
Since $s$ ranges over dyadic numbers in $[R^{-\frac12},1]$, this gives \Cref{cor:sqweighted} when $H: \R^2 \to [R^{-400},1]$.

For the case $H: \R^2 \to [0,1]$, we decompose $H=H_1 + H_2$, where $0\leq H_1\leq R^{-400}$ and $R^{-400} \leq H_2 \leq 1$. For $H_2$, we have already obtained a bound which involves $\sup_{U\in \U} \kappa_{p,H}(U)$. For $H_1$, we use the unweighted case ($H\equiv 1$) to get  
\[  \| f \|_{L^p(H_1)} \leq R^{-100}\| f \|_{L^p(\R^2)} \less R^{-100} \big\| \big(\sum_{\theta} |f_{\theta}|^2 \big)^{1/2}  \big\|_{L^p}. \]
Combining these estimates yields \Cref{cor:sqweighted}.
\qed

\subsection{Weighted estimates for a radial Fourier multiplier}
Let $\psi$ be a smooth bump function supported on $[-1,1]$. 
We consider the Fourier multiplier transformation $S_R$  defined by 
\[ \widehat{S_R f} (\xi) = \psi( R (1-|\xi|) ) \hat{f}(\xi), \]
which plays a critical role in the theory of Bochner-Riesz means. It is well-known that 
\begin{equation}\label{eqn:BR}
	\| S_R f\|_{L^p(\R^2)} \less \| f\|_{L^p(\R^2)}, \;\; 2\leq p\leq 4,
\end{equation}
which follows from an interpolation of the trivial $L^2$-bound and the sharp $L^4$-bound due to C\'ordoba \cite{Co} which relies on the square function estimate \eqref{eqn:Fef} and  bounds for the Nikodym maximal function.

We present a weighted version of \eqref{eqn:BR}. For each dyadic scale $R^{-1/2}\leq s\leq 1$, we cover $N_{R^{-1}}\mathbb{S}^1$ by finitely overlapping rectangles $\tau$ of dimensions $s\times s^2$ and define $\T_\tau$, $\U_\tau$ and $\U$, accordingly. With this minor modification in mind, we obtain the following. 
\begin{theorem}\label{thm:BR}
Let $2\leq p\leq 4$ and $H:\R^2 \to [0,1]$ be a function. Then 
\[ 
			\| S_R f\|_{L^p(Hd x)} \less  \big(\max_{U\in \U} \kappa_{p,H}(U) + R^{-100} \big)   \| f\|_{L^p(\R^2)} .
\]	
\end{theorem}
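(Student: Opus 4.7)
The plan is to combine the circle version of \Cref{cor:sqweighted} (available by the remark following that theorem) with the classical unweighted vector-valued bound implicit in C\'ordoba's work \cite{Co}. First, fix a smooth partition of unity $\{\eta_\theta\}$ subordinate to the finitely overlapping cover of $N_{R^{-1}}\mathbb{S}^1$ by the canonical $R^{-1/2}\times R^{-1}$ rectangles $\theta$ (as in the modified setup preceding \Cref{thm:BR}), and decompose $\widehat{S_R f} = \sum_\theta m_\theta\,\widehat f$, where $m_\theta(\xi) := \eta_\theta(\xi)\,\psi\bigl(R(1-|\xi|)\bigr)$. Denote by $(S_R f)_\theta$ the function with $\widehat{(S_R f)_\theta} = m_\theta \widehat f$, which is supported in $\theta$. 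Since $\widehat{S_R f}$ is itself supported in $N_{R^{-1}}\mathbb{S}^1$, the circle version of \Cref{cor:sqweighted} applied to $S_R f$ yields
\[
\|S_R f\|_{L^p(H\,dx)} \less \Bigl(\max_{U\in \U}\kappa_{p,H}(U)+R^{-100}\Bigr)\,\Bigl\|\bigl(\sum_\theta |(S_R f)_\theta|^2\bigr)^{1/2}\Bigr\|_{L^p(\R^2)}.
\]

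It therefore suffices to establish the unweighted vector-valued inequality
\[
\Bigl\|\bigl(\sum_\theta |(S_R f)_\theta|^2\bigr)^{1/2}\Bigr\|_{L^p(\R^2)} \lesssim \|f\|_{L^p(\R^2)}, \qquad 2\le p\le 4.
\]
At the endpoint $p=2$, this is immediate from Plancherel and the finite overlap of $\{m_\theta\}$, giving $\sum_\theta\|m_\theta\widehat f\|_{L^2}^2 \lesssim \|\widehat f\|_{L^2}^2 = \|f\|_{L^2}^2$. At the endpoint $p=4$, this is precisely the square-function step in C\'ordoba's proof \cite{Co} that $\|S_R\|_{L^4\to L^4}\lesssim 1$: each kernel is an $L^1$-bounded smooth bump concentrated on a tube of dimensions $R^{1/2}\times R$ dual to $\theta$, and combining Fefferman's finite-overlap property of $\theta_1+\theta_2$ for caps on the circle with the planar Kakeya/Nikodym maximal estimate delivers the $L^4$ bound. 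Riesz--Thorin interpolation then extends the inequality to every $2<p<4$.

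The main obstacle is the $p=4$ vector-valued bound: it is the only step where genuine planar Kakeya geometry (beyond the finite-overlap of $\{\eta_\theta\}$) is used in an essential way, and it is the classical input we import from \cite{Co}. All of the weight dependence of \Cref{thm:BR} is carried by the first step, where \Cref{cor:sqweighted} is applied directly to $S_R f$; no additional information about $H$ beyond $\max_{U\in\U}\kappa_{p,H}(U)$ is needed, and the negligible $R^{-100}$ term is already absorbed by the corresponding term produced by \Cref{cor:sqweighted}.
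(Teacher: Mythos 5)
Your proposal is correct and follows essentially the same route as the paper: decompose $S_Rf$ into the canonical caps, apply the circle version of \Cref{cor:sqweighted} to $S_Rf$, and reduce to the classical unweighted vector-valued bound $\|(\sum_\theta|(S_Rf)_\theta|^2)^{1/2}\|_{L^p}\lesssim\|f\|_{L^p}$, which the paper likewise delegates to C\'ordoba's duality/Nikodym argument together with the Littlewood--Paley inequality for equally spaced intervals (the paper runs this directly for all $2\le p\le 4$ rather than interpolating between the endpoints, and realizes the cap decomposition as vertical strips $\chi_I(\xi_1)$ after a sector reduction). One small inaccuracy in your description of the $p=4$ step: Fefferman's finite-overlap property of $\theta_1+\theta_2$ underlies the \emph{reverse} square function estimate, which here has already been replaced by \Cref{cor:sqweighted}; the forward vector-valued bound needs only the dual-tube kernel estimate, duality, the Nikodym maximal bound, and the Littlewood--Paley step \eqref{LP}.
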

\begin{proof}
	The proof is essentially the same as the proof of \eqref{eqn:BR}  by C\'{o}rdoba \cite{Co}, so we only  sketch the argument. We divide $\R^2$ into four sectors by lines $y=\pm x$. Without loss of generality, we may replace $S_R$ by a smooth frequency projection to the part of $N_{R^{-1}}\mathbb{S}^1$ contained in one of the four sectors which includes the point $(0,-1)$.  
	
	Next, we cover $[-1,1]$ by finitely overlapping intervals $I$ of length $\sim R^{-1/2}$, and let $\{ \chi_I\}$ be a smooth partition of unity adapted to this covering. Then we have $S_R f=\sum_{I} S_R f_I$, where $\hat{f_I}(\xi_1,\xi_2) =  \chi_I(\xi_1) \hat{f}(\xi_1,\xi_2)$. By \Cref{cor:sqweighted}, we have
	\[ \|S_R f\|_{L^p(Hd x)} \less   \big(\max_{U\in \U} \kappa_{p,H}(U) +R^{-100}\big) \big\| \big(\sum_I |S_R f_I|^2 \big)^{1/2}\big\|_{L^p(\R^2)}. \]
	By duality and the boundedness of the Nikodym maximal function, for any $2\leq p\leq 4$,
    \[ \big\| \big(\sum_I |S_R f_I|^2 \big)^{1/2}\big\|_{L^p(\R^2)} \less \big\| \big(\sum_I |f_I|^2 \big)^{1/2}\big\|_{L^p(\R^2)}.	\]
   Finally, by the Littlewood-Paley inequality for equally spaced intervals, we have  
	\begin{align}\label{LP}
		\big\| \big(\sum_I |f_I|^2 \big)^{1/2}\big\|_{L^p(\R^2)} \les \| f\|_{L^p(\R^2)}, \;\;\; p \geq 2,
	\end{align}
	which completes the proof.
\end{proof}

\subsection{Weighted estimates for the Schr\"odinger equation}\label{sec:etaDefn}
Let \[
e^{it\partial_x^2}f(x)
=(2\pi)^{-1}\int e^{ix\xi}e^{it\xi^2}
\widehat f(\xi)\,d\xi
\]
denote the solution to the free Schr\"odinger equation
\begin{align*}
	\begin{cases}
		i\partial_tu = \partial_x^2 u, &(x,t)\in \R \times \R \\
		u(x,0)=f(x), &x\in \R.    
	\end{cases}
\end{align*}

Let $\eta \in C^\infty_c(\R)$ be such that $\widehat{\eta}$  is compactly supported on $[-1,1]$ and $|\eta(t)|\sim 1$ on $[-1,1]$. We define the operator $\mathbf{U}_R$ by 
\begin{equation}\label{eqn:defnU_R}
	\mathbf{U}_R f(x,t)=\eta(R^{-1}t)e^{it\partial_x^2}f(x).	
\end{equation}
If $\hat{f}$ is supported on $[-1,1]$, then $\hat{\mathbf {U}_R f}$ is supported on $N_{R^{-1}} \cP$. 

As another consequence of \Cref{cor:sqweighted}, we establish a weighted  estimate for the Schr\"odinger propagator. 
\begin{theorem}\label{thm:SchFreqLocal}
	Let $2\leq p\leq 4$ and $H:\R^2 \to [0,1]$ be a function. For any function $f$ whose Fourier transform is supported on $[-1,1]$, we have 
	\[ 
	\| \mathbf{U}_R f \|_{L^p(\R^2, Hdxdt)} \less  \big(\max_{U\in \U} \kappa_{p,H}(U) + R^{-100} \big) R^{\frac1p} \| f\|_{L^p(\R)}.
	\]	
\end{theorem}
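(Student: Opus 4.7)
The plan is to invoke \Cref{cor:sqweighted} on $\mathbf{U}_R f$, viewed as a function on $\R^2$. A direct Fourier computation gives
\[ \widehat{\mathbf{U}_R f}(\xi_1,\xi_2) = R\,\hat\eta(R(\xi_2-\xi_1^2))\,\hat f(\xi_1), \]
which, by the support hypotheses on $\hat\eta$ and $\hat f$, is supported in $N_{R^{-1}}\cP$. Applying \Cref{cor:sqweighted} yields
\[ \|\mathbf{U}_R f\|_{L^p(Hdxdt)} \less \big(\max_{U\in \U} \kappa_{p,H}(U) + R^{-100}\big) \Big\|\Big(\sum_\theta |(\mathbf{U}_R f)_\theta|^2\Big)^{1/2}\Big\|_{L^p(\R^2)}, \]
so it remains to bound the square function by $R^{1/p}\|f\|_{L^p(\R)}$.

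I would choose the canonical cover so that each $\theta$ projects to a $\xi_1$-interval $I$ of length $\sim R^{-1/2}$; setting $\hat{f_I}=\chi_I\hat f$ for a subordinate partition of unity on $[-1,1]$, the product form of $\widehat{\mathbf{U}_R f}$ gives $(\mathbf{U}_R f)_\theta = \mathbf{U}_R f_I$. Applying Minkowski's inequality (valid for $p\geq 2$) twice and then the Littlewood--Paley inequality \eqref{LP} for equally spaced intervals gives
\[ \Big\|\Big(\sum_I |\mathbf{U}_R f_I|^2\Big)^{1/2}\Big\|_{L^p(\R^2)} \leq \Big(\sum_I \|\mathbf{U}_R f_I\|_{L^p(\R^2)}^2\Big)^{1/2}, \quad \Big(\sum_I \|f_I\|_{L^p(\R)}^2\Big)^{1/2} \les \|f\|_{L^p(\R)}. \]
The remaining step is the pointwise estimate $\|\mathbf{U}_R f_I\|_{L^p(\R^2)} \les R^{1/p}\|f_I\|_{L^p(\R)}$; since $\int |\eta(R^{-1}t)|^p\,dt\les R$, this reduces to showing $\|e^{it\partial_x^2}f_I\|_{L^p(\R)} \les \|f_I\|_{L^p(\R)}$ uniformly for $|t|\les R$.

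For this last bound, which is the only nontrivial point, write $\xi=\xi_I+\eta$ on $I$ and observe that the symbol $\psi_I(\xi)e^{it\xi^2}$ (with $\psi_I$ a slightly enlarged cutoff of $\chi_I$) factors as the modulation--translation symbol $e^{it(\xi_I^2+2\xi_I\eta)}$ times $\psi_I(\xi_I+\eta)e^{it\eta^2}$. Since $|t\eta^2|\les 1$ on the relevant range $|\eta|\les R^{-1/2}$ and $|t|\les R$, the second factor, viewed in the rescaled variable $\zeta=R^{1/2}\eta\in [-\tfrac12,\tfrac12]$, is smooth with derivatives bounded uniformly in $t$; its inverse Fourier transform is therefore a Schwartz function with $L^1$ norm $O(1)$, and convolution against it preserves $L^p$. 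Chaining all of the above estimates together completes the proof; the main technical ingredient beyond \Cref{cor:sqweighted} and standard Littlewood--Paley theory is thus the tameness of the Schr\"odinger chirp at the scale $|t||I|^2\les 1$.
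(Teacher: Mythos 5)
The first half of your argument is fine and matches the paper: the support computation for $\widehat{\mathbf{U}_Rf}$, the application of \Cref{cor:sqweighted}, the identification $(\mathbf{U}_Rf)_\theta=\mathbf{U}_Rf_I$, and the fixed-time bound $\|e^{it\partial_x^2}f_I\|_{L^p(\R)}\les\|f_I\|_{L^p(\R)}$ for $|t|\les R$ (the chirp $e^{it\eta^2}$ is indeed tame since $|t||I|^2\les 1$; this is exactly the kernel estimate \eqref{decay-ker} in the appendix). The gap is in your last display. The inequality $\big(\sum_I\|f_I\|_{L^p(\R)}^2\big)^{1/2}\les\|f\|_{L^p(\R)}$ is \emph{not} the Littlewood--Paley inequality \eqref{LP}: for $p\ge2$, Minkowski gives $\big\|(\sum_I|f_I|^2)^{1/2}\big\|_p\le\big(\sum_I\|f_I\|_p^2\big)^{1/2}$, so the $\ell^2(L^p)$ quantity sits \emph{above} the square function, not below $\|f\|_p$. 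In fact the claimed inequality is false for $p>2$: take $f=\sum_Ie^{ic_Ix}g(x-x_I)$ with $\hat g$ a bump on $[0,R^{-1/2}]$, $\|g\|_p=1$, and the $x_I$ separated by $\gg R^{1/2}$. Then the $f_I$ have essentially disjoint supports, so $\|f\|_p\sim N^{1/p}$ while $\big(\sum_I\|f_I\|_p^2\big)^{1/2}=N^{1/2}$ with $N\sim R^{1/2}$, a loss of $R^{\frac14-\frac1{2p}}$ (e.g. $R^{1/8}$ at $p=4$). Repairing your chain by H\"older on the sum gives only $\big(\sum_I\|f_I\|_p^2\big)^{1/2}\le N^{\frac12-\frac1p}\|f\|_p$, so your route cannot reach $R^{1/p}$ for $2<p\le4$.

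The underlying issue is that passing to $\ell^2(L^p)$ via Minkowski discards the spatial almost-orthogonality of the pieces $\mathbf{U}_Rf_I$, which live on distinct slabs $\{|x+2tc_I|\les R^{1/2}\}$ in space--time; that orthogonality is precisely the content of \eqref{sqfc}. The paper proves \eqref{sqfc} by the C\'ordoba/Mockenhaupt--Seeger--Sogge duality argument (\Cref{sec:sqfc}): one writes $|\mathbf{U}_Rf_I|^2\les|K_I^t|*|f_I|^2$, pairs $\sum_I|\mathbf{U}_Rf_I|^2$ with $g\in L^{(p/2)'}$, and controls the dual object by the Nikodym maximal function $\mathfrak N$ over the family of tubes $T_w$; the $L^q$ bounds for $\mathfrak N$, $q\ge2$, are the geometric input your argument is missing. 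Only after this reduction is the one-dimensional Littlewood--Paley square function bound $\big\|(\sum_I|f_I|^2)^{1/2}\big\|_{L^p(\R)}\les\|f\|_{L^p(\R)}$ applied. You need to replace the Minkowski step with this (or an equivalent) argument.
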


\begin{proof}
Let $\{\chi_I\}$ be the smooth partition of unity given in \Cref{thm:BR}. We decompose $f=\sum_{I} f_I$, where $\hat{f_I} =  \chi_I \hat{f}$. By \Cref{cor:sqweighted}, we have 
	\[ 
	\| \mathbf{U}_Rf \|_{L^p(\R^2, Hdxdt)}  \less  \big(\max_{U\in \U} \kappa_{p,H}(U) + R^{-100} \big) \big\| \big(\sum_I |\bU_R f_I|^2 \big)^{1/2}\big\|_{L^p(\R^2)}.
	\]	
	Thus, it suffices to verify that
	\begin{align}\label{sqfc}
		\Big\| \big(\sum_I | \mathbf{U}_R f_I|^2\big)^{1/2} \Big\|_{L^p(\R^2)}
		\less R^{\frac1p}\|f\|_{L^p(\R^2)}, \;\; 2\leq p\leq 4.
	\end{align}
	A detailed proof of \eqref{sqfc} can be found in a note by Yung \cite[Proof of Theorem 2]{Yung}. It follows from a standard duality argument similar to the one used in \cite{Co} and  \cite{MSS}. For completeness, we sketch the argument in \Cref{sec:sqfc}.
\end{proof}

As a corollary, we state a special case for $\alpha$-dimensional measures.
For $0<\alpha<1$ and a measure $\mu$ defined on $\R^2$, we set
    \begin{align}\label{mu_ball}
    \inn{\mu}_\alpha := \sup_{(z,\rho)\in \R^2\times [1,\infty) } \rho^{-\alpha} \mu(B_\rho(z)). \end{align}
\begin{coro}\label{cor:SchFreqLocal}
	 Let $2\leq p\leq 4$ and $0\leq \alpha\leq 2$. 
     For any function $f$ whose Fourier transform is supported on $[-1,1]$ and $\mu$ satisfying $\inn{\mu}_\alpha \le1$, we have 
     \begin{align*}
\big\| \mathbf{U}_R f \big\|_{L^p(\R\times[0,R], \mu)} \less  R^{\frac1p-(2-\alpha)\big(\frac1p-\frac14\big)} \| f\|_{L^p(\R)}.
\end{align*}
\end{coro}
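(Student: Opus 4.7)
The plan is to deduce \Cref{cor:SchFreqLocal} from \Cref{thm:SchFreqLocal} by replacing the measure $\mu$ with a bounded $\alpha$-dimensional density $H$ produced by mollification at the spatial scale set by the Fourier support of $\mathbf{U}_R f$. Since the norm integrates $|\mathbf{U}_R f|^p$ only over $\R\times[0,R]$, I replace $\mu$ by $\mu':=\mu|_{\R\times[0,R]}$, which still satisfies $\inn{\mu'}_\alpha\le 1$. Choose a Schwartz function $\Phi$ with $\widehat{\Phi}$ compactly supported in a fixed neighborhood of $\cP$ (independent of $R$) and identically $1$ on $N_{R^{-1}}\cP$; then $\mathbf{U}_R f = \mathbf{U}_R f * \Phi$, and Jensen's inequality applied to the probability measure $|\Phi|/\|\Phi\|_{L^1}$ together with $\|\Phi\|_{L^1}\les 1$ yields
\[
|\mathbf{U}_R f(y)|^p \les (|\mathbf{U}_R f|^p * |\Phi|)(y).
\]
Integrating against $d\mu'$ and using Fubini gives
\[
\int |\mathbf{U}_R f|^p \,d\mu' \les \int |\mathbf{U}_R f(z)|^p\, G(z)\,dz, \qquad G := |\Phi| * \mu'.
\]

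The second step is to check that a normalized version of $G$ is a bounded $\alpha$-dimensional weight on $\R^2$. Using the rapid decay $|\Phi(x)|\les_N (1+|x|)^{-N}$ and $\inn{\mu'}_\alpha\le 1$, a dyadic decomposition yields, for any $N>\alpha$,
\[
G(z) \les \sum_{k\ge 0} 2^{-kN}\,\mu'(B_{2^k}(z)) \les \sum_{k\ge 0} 2^{-k(N-\alpha)} \les 1.
\]
Similarly, for $\rho\ge 1$ and $z_0\in\R^2$, splitting $\int_{B_\rho(z_0)} G\,dz$ according to whether $y\in B_{2\rho}(z_0)$ or $y\notin B_{2\rho}(z_0)$, and exploiting $|B_\rho|\les\rho^2$ together with the tail decay of $\Phi$, produces $\int_{B_\rho(z_0)} G\,dz \les \rho^\alpha$, i.e., $\inn{G}_\alpha\les 1$. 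Setting $H := cG$ for a sufficiently small absolute constant $c$ gives $H:\R^2\to[0,1]$ with $\inn{H}_\alpha\les 1$.

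Finally, \Cref{ex_alpha} applied to $H$ gives $\max_{U\in \U}\kappa_{p,H}(U) \les R^{-(2-\alpha)(\frac{1}{p}-\frac{1}{4})}$, and \Cref{thm:SchFreqLocal} with this $H$ combined with the reduction above produces
\[
\|\mathbf{U}_R f\|_{L^p(\mu)}^p \les \int |\mathbf{U}_R f|^p H\,dz \less R\cdot R^{-(2-\alpha)(1-\tfrac{p}{4})} \|f\|_{L^p(\R)}^p.
\]
Taking $p$-th roots yields the stated inequality. The only point requiring care is the verification that $G$ inherits the $\alpha$-dimensionality from $\mu'$; this dictates the choice of mollification scale, and it is precisely the Fourier localization of $f$ to $[-1,1]$ that makes the unit-scale mollifier $\Phi$ available in the first place.
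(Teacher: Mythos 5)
Your proposal is correct and follows essentially the same route as the paper: write $\mathbf{U}_Rf=\mathbf{U}_Rf*\varphi$ using the Fourier localization, dominate $|\mathbf{U}_Rf|^p$ by $|\mathbf{U}_Rf|^p*|\varphi|$, transfer the measure to the mollified weight $H\sim \mu*|\varphi|$, verify $\|H\|_\infty\les 1$ and $\inn{H}_\alpha\les 1$ by dyadic decomposition of the Schwartz tails, and conclude via \Cref{thm:SchFreqLocal} together with \eqref{eqn:kappaAlpha}. The restriction to $\mu|_{\R\times[0,R]}$ and the normalization $H=cG$ are harmless cosmetic refinements.
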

\begin{proof}
Recall \eqref{eqn:defnU_R}.
    By the Fourier localization property of $\bU_R f$, we may write $\bU_R f = \bU_R f * \varphi$ for a Schwartz function $\varphi \in \mathcal S(\R^2)$. 
    Consequently, by H\"{o}lder's inequality,
\[
\big|\mathbf U_Rf \ast \varphi \big|^p
\lesssim |\mathbf U_Rf|^p \ast |\varphi|
\]
for $p\ge1$. It follows that
\[
\int \big|\mathbf U_Rf(x,t)\big|^p\,d\mu(x,t)
\lesssim
\int \big|\mathbf U_Rf(x,t)\big|^p H(x,t)\,dxdt
\]
where 
$
   H=\mu \ast |\varphi|. 
$

We check that $H$ is $\alpha$-dimensional, using the dyadic decomposition $|\varphi|\les \sum_{j\in \N} 2^{-10j} 1_{B_{2^j}}$.  By using the decay and the assumption that $\inn{\mu}_{\alpha}\les 1$, we have
\begin{align*}
    \int_{B_\rho(z)} H  \les \sum_{j\in \N} 2^{-8j} \mu( B_{\rho}(z) + B_{2^j} ) \les \rho^{\alpha}.
\end{align*}
Thus, $\inn{H}_\alpha\les 1$. A similar computation shows that $\| H\|_{\infty} \les 1$. Consequently, \Cref{thm:SchFreqLocal} and \eqref{eqn:kappaAlpha} give the desired estimate. 
\end{proof}

\Cref{cor:SchFreqLocal} can be obtained by interpolation between known $L^2$ and $L^4$ estimates. Indeed, when $p=2$, \Cref{cor:SchFreqLocal} recovers a bound due to Du and Zhang \cite{DuZhang} (see also \cite{Wolff2, Erdogan}). The $p=4$ case of \Cref{cor:SchFreqLocal} can be deduced from a local smoothing estimate for the Schr\"odinger equation:
\begin{equation}\label{eqn:Rogers}
	\| e^{it\partial_x^2} f \|_{L^p(\R\times [0,1])} \leq C_{p,\gamma} \| f\|_{L^p_\gamma(\R)},\;\; p\in (2,\infty) \; \text{and} \; \gamma>\max\Big(0,1-\frac{4}{p}\Big),
\end{equation}
which is due to  Rogers \cite{Rogers}.  Here $L^p_\gamma(\R)$ denotes the $L^p$-Sobolev space equipped with the norm $\|f\|_{L_\gamma^p(\R)}=\|(1-\Delta)^{\gamma/2}f\|_{L^p(\R)}$. The regularity assumption on $\gamma$ in \eqref{eqn:Rogers} is essentially sharp.

The bound obtained in \Cref{cor:SchFreqLocal} is essentially sharp when $p=2$ or $p=4$. Indeed, 
\begin{align}\label{global_LS}
\big\| \mathbf{U}_R f \big\|_{L^p(\R\times[0,R], \mu)} \les  R^{\zeta} \| f\|_{L^p(\R)}
\end{align}
holds only if
\begin{align*}
\zeta \ge \begin{cases}
	\max\big(\frac12-\frac1p, ~\frac{\alpha}{2p}\big), \;\;&\alpha\in[1,2],\\[3pt]
	\max\big(\frac12-\frac1p, ~\frac{2\alpha-1}{2p}\big),\;\;&\alpha\in[0,1].
\end{cases}
\end{align*}
We discuss the detail in \Cref{sec:global_low}.

\subsection{Fractal local smoothing estimates relative to parabolic balls}
The estimate \eqref{eqn:Rogers} can be regarded as an analogue of the local smoothing phenomenon for the wave equation, first discovered by Sogge \cite{Sogge}. Indeed, comparing \eqref{eqn:Rogers} with the sharp fixed-time estimate due to Miyachi \cite{Miyachi},
\[  \| e^{it\partial_x^2}f \|_{L^p(\R)} \les_{p,\gamma} \| f\|_{L^p_\gamma(\R)}, \;\; p\in (1,\infty) \; \text{and} \; \gamma\geq  \Big|1-\frac{2}{p}\Big|,\]
it follows that averaging over a compact time interval yields a gain of $2/p$ derivatives whenever $p>4$. In the context of the Schr\"odinger equation, local smoothing estimates generally refer to such derivative gains obtained by averaging over a compact space-time region (see e.g. \cite{Sjolin}). 

Rogers \cite{Rogers} proved \eqref{eqn:Rogers} by connecting it to the Fourier restriction estimate for the parabola. See \cite{Yung} for a proof of  \eqref{eqn:Rogers} which relies on the square function estimate \eqref{eqn:Fef}. 

We seek to extend estimates of the form \eqref{eqn:Rogers} to general measures on $\R\times [0,1]$  that satisfy suitable size conditions. We refer to these as \textit{fractal local smoothing estimates} for the Schr\"odinger equation. 

For $0\leq \beta \leq 3$, we consider a class of Borel measures on $\R^2$ for which 
\begin{align*}
[\mu]_{\beta,\text{par}} := \sup_{z\in \R^2, \rho>0}  \rho^{-\beta}\mu\big(B_{\rho,\text{par}}(z)\big) \lesssim 1,
\end{align*} 
where $B_{\rho,\textit{par}}(z)$ denotes the parabolic ``ball" $(z_1-\rho,z_1+\rho)\times (z_2-\rho^2,z_2+\rho^2)$ for $z=(z_1,z_2)$.  
This class of measures naturally arises in view of the parabolic rescaling associated with the Schr\"odinger equation. For such measures, we consider the estimate
\begin{align}\label{fls_par}
\big\| e^{it\partial_x^2}f \big\|_{L^p(\R\times [0,1],\mu)}
\le C [\mu]_{\beta,\text{par}}^{1/p} \|f\|_{L_\gamma^p(\R)}.
\end{align}

\begin{theorem}[Parabolic $\beta$-dimensional case]\label{FLSS_par}
Let $0\le \beta \le 3$ and let $\mu$ be a Borel measure on $\R^2$ with $[\mu]_{\beta,\mathrm{par}}\le1$. 
\vspace{-.2cm}
\begin{enumerate}[(i)]
\item 
\textit{(Sufficiency)} 
For $2\le p\le 4$, there exists $C=C_{\beta,p,\gamma}>0$ such that \eqref{fls_par} holds whenever
\[
\gamma>\gamma_{\text{par}}(\beta):=
  \begin{cases}
\frac{3-\beta}{4}, & \beta\in[1,3],\\[.4ex]
\frac{2-\beta}{2} - \frac{1-\beta}{p}, & \beta\in[0,1].
  \end{cases}
\]
\item 
\textit{(Necessity)} 
Conversely, if \eqref{fls_par} holds for some $0\le\beta\le 3$, then
\[
  \gamma\ge
  \begin{cases}
  \max\!\big\{1-\frac{\beta+1}{p},\, \frac{3-\beta}{2p}\big\}, & \beta\in[1,3],\\[1ex]
\max\!\big\{1-\frac{\beta+1}{p},\,\frac\beta p\}, & \beta\in[0,1].
  \end{cases}
\]
\end{enumerate}
\end{theorem}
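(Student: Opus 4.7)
My plan is to combine a Littlewood--Paley decomposition in the spatial frequency with parabolic rescaling so as to reduce matters to the frequency-localized weighted bound of \Cref{thm:SchFreqLocal}. Writing $f=\sum_{N\text{ dyadic}}f_N$ with $\widehat{f_N}$ supported in $\{|\xi|\sim N\}$, it suffices to prove
\[
\|e^{it\partial_x^2}f_N\|_{L^p(\R\times[0,1],\mu)}\;\lessapprox\;[\mu]_{\beta,\mathrm{par}}^{1/p}\,N^{\gamma_{\text{par}}(\beta)}\,\|f_N\|_{L^p(\R)}
\]
for each dyadic $N\ge 1$; summing in $N$ via the triangle inequality in $L^p(\mu)$, Cauchy--Schwarz in $N$, and the Littlewood--Paley characterization $\|f\|_{L^p_\gamma}\sim \|(\sum_N N^{2\gamma}|f_N|^2)^{1/2}\|_{L^p}$ will absorb the $(\log R)^{O(1)}$ loss provided $\gamma>\gamma_{\text{par}}(\beta)$.

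The parabolic change of variables $(y,t')=(Nx,N^2t)$ turns the frequency support of $f_N$ into a unit-scale set and pushes $\mu$ to a measure $\tilde\mu$ on $\R\times[0,N^2]$. A direct unwrapping of the parabolic ball shows $[\tilde\mu]_{\beta,\mathrm{par}}\le N^{-\beta}[\mu]_{\beta,\mathrm{par}}$, since a parabolic ball of radius $\rho$ in the new coordinates pulls back to one of radius $\rho/N$. Setting $g_N(y)=f_N(y/N)$, so that $\|g_N\|_p=N^{1/p}\|f_N\|_p$ and $\widehat{g_N}$ lives in a bounded set, I smooth $\tilde\mu$ against a Schwartz function adapted to $N_{N^{-2}}\cP$ to obtain an $L^\infty$-weight $H$ with $\|H\|_\infty\lesssim 1$, in the spirit of the proof of \Cref{cor:SchFreqLocal}. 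Applying \Cref{thm:SchFreqLocal} with $R=N^2$ reduces everything to bounding $\max_{U\in\U}\kappa_{p,H}(U)$ over the multi-scale tubes $U$ of dimensions $N^2s\times N^2$ and their subtubes $T$ of dimensions $s^{-1}\times s^{-2}$, $s\in[N^{-1},1]$.

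The crux is to exploit the parabolic condition on $\tilde\mu$ through two different coverings of $U$ and one covering of $T$. Since $T$ has parabolic shape of radius $s^{-1}$, the condition $[\tilde\mu]_{\beta,\mathrm{par}}\le N^{-\beta}$ directly yields $\tilde\mu(T)\lesssim N^{-\beta}s^{-\beta}$. For $U$ I use two complementary bounds,
\[
\tilde\mu(U)\;\lesssim\;Ns\qquad\text{and}\qquad \tilde\mu(U)\;\lesssim\;N^{\beta},
\]
obtained respectively by covering $U$ with $\sim Ns$ parabolic balls of radius $N$ (the largest parabolic ball fitting the time extent of $U$), and by enclosing $U$ in a single Euclidean ball of radius $\sim N^2$ and invoking $B_\rho\subset B_{\rho,\mathrm{par}}$ for $\rho\ge 1$. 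A quick comparison shows the first is sharper for $\beta\in[1,3]$ and the second for $\beta\in[0,1]$, which is precisely the two-case split in the definition of $\gamma_{\text{par}}$. Substituting into \eqref{kappa_def} and using that the resulting $s$-exponent is non-negative in each regime (so $\max_s$ is attained at $s=1$), then multiplying through by the factor $R^{1/p}=N^{2/p}$ from \Cref{thm:SchFreqLocal} and the rescaling factor $N^{1/p}$ from $\|g_N\|_p$, a short bookkeeping produces the $N$-exponents $(3-\beta)/4$ for $\beta\in[1,3]$ and $(2-\beta)/2-(1-\beta)/p$ for $\beta\in[0,1]$, matching $\gamma_{\text{par}}(\beta)$.

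\textbf{Necessity (ii) and main obstacle.} The three lower bounds are to be verified in \Cref{examp} by testing \eqref{fls_par} against three extremal configurations: a Knapp-type wave packet at unit frequency paired with a $[\mu]_{\beta,\mathrm{par}}$-maximal measure on a unit box gives $\gamma\ge 1-(\beta+1)/p$; a high-frequency wave packet localized on a thin space-time tube paired with $\mu$ at the maximal parabolic density along that tube gives $\gamma\ge(3-\beta)/(2p)$ for $\beta\in[1,3]$; and a fractal measure of parabolic dimension $\beta$ placed transverse to the propagation yields $\gamma\ge\beta/p$ for $\beta\in[0,1]$. The main obstacle for (i) is the correct bookkeeping of the two coverings: one has to verify that the parabolic-at-scale-$N$ and Euclidean-at-scale-$N^2$ bounds together already produce sharp exponents across both regimes of $\beta$ and every $p\in[2,4]$, without requiring a finer covering or any further subdivision of $s$ inside each regime. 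A secondary technical point is the passage from $\tilde\mu$ to the smoothed weight $H$: a dyadic decomposition of $|\varphi|$ exactly as in the proof of \Cref{cor:SchFreqLocal} shows $\|H\|_\infty\lesssim 1$ and preserves the parabolic estimates on $H(T)$ and $H(U)$ up to constants.
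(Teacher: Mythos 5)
Your overall architecture for (i) — Littlewood--Paley in frequency, parabolic rescaling to unit frequency, then the frequency-localized weighted bound of \Cref{thm:SchFreqLocal} after mollifying the rescaled measure into an $L^\infty$ weight — is the same as the paper's. The difference, and the gap, is in how you estimate $\max_{U\in\U}\kappa_{p,H}(U)$: you feed the \emph{parabolic} ball condition on the rescaled measure $\tilde\mu$ directly into the tubes $T$ and $U$, claiming $\tilde\mu(T)\lesssim N^{-\beta}s^{-\beta}$ because ``$T$ has parabolic shape of radius $s^{-1}$,'' and $\tilde\mu(U)\lesssim Ns$ by covering $U$ with $\sim Ns$ parabolic balls of radius $N$. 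This is false for general caps. By \eqref{eqn:Ltau}, a tube $T\in\T_\tau$ for $\tau$ centered at $(c,c^2)$ points in the normal direction $(-2c,1)$, so for $|c|\sim 1$ its $x$-projection has length $\sim s^{-2}$, not $s^{-1}$; the parabolic balls $B_{\rho,\mathrm{par}}(z)$ are \emph{axis-parallel} rectangles, and covering such a slanted $s^{-1}\times s^{-2}$ tube requires $\sim s^{-1}$ parabolic balls of radius $s^{-1}$ (and optimizing over the covering radius only improves this to $\tilde\mu(T)\lesssim N^{-\beta}s^{-(3+\beta)/2}$). Likewise the slanted $U$ has $x$-extent up to $N^2$, so your count $Ns$ becomes $N$. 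With these honest bounds the $s$-exponent of $\kappa_{p,H}(U)$ is no longer nonnegative throughout $2\le p\le 4$ when $\beta>1$ (for $p$ near $2$ it becomes negative), so the maximum is not attained at $s=1$ and your bookkeeping does not close. Galilean invariance cannot be used to reduce to $c=0$, because the shear $(x,t)\mapsto(x+2t\xi_0,t)$ does not preserve the class of axis-parallel parabolic rectangles.

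The paper circumvents exactly this by inserting an intermediate step (\Cref{lem:measure}): the parabolic condition $[\mu]_{\beta,\mathrm{par}}\le 1$ is first converted into a \emph{Euclidean} dimensional condition on the rescaled measure, $\inn{\mu_R}_{(\beta+1)/2}\les R^{-\beta}$ for $\beta\in[1,3]$ and $\inn{\mu_R}_{\beta}\les R^{-\beta}$ for $\beta\in[0,1]$. That conversion only requires covering an axis-parallel rectangle (the preimage of a Euclidean ball) by axis-parallel parabolic rectangles, so no slant issue arises; one then invokes \Cref{cor:SchFreqLocal} and the computation \eqref{eqn:kappaAlpha}, whose coverings use Euclidean balls and are insensitive to the orientation of $T$ and $U$. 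Your argument would be repaired by replacing your two covering claims with this reduction (note in particular that the correct exponent $\frac{\beta+1}{2}$, rather than $\beta$, is what makes the two-case split and the final exponents come out right). Separately, your treatment of the necessity part (ii) is only a gesture at three test configurations; the paper obtains these lower bounds by rescaling the explicit examples of Section 6.1 through \Cref{lem:measure}, and as written your sketch does not verify the parabolic density of the proposed measures or the resulting exponents.
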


The necessary condition shows that $\gamma> \gamma_{\text{par}}(\beta)$ is essentially sharp for $p=4$ for all $0\leq \beta\leq 3$, and for $p=2$ for all $1\leq \beta\leq 3$.
We prove the sufficiency part of \Cref{FLSS_par} in \Cref{sec:ls} and the necessity part in \Cref{examp}.

\begin{figure}[!htbp]
\centering
\vspace*{8mm}
\hfill
\begin{subfigure}[b][2cm][c]{0.46\textwidth}
\centering
\begin{tikzpicture}[xscale=11,yscale=5]
  \pgfmathsetmacro{\betaR}{2.0}
  \pgfmathsetmacro{\ySufR}{(3 - \betaR)/4}
  \pgfmathsetmacro{\xkR}{2/(\betaR + 5)}
  \pgfmathsetmacro{\ykR}{(3 - \betaR)/(\betaR + 5)}
  \pgfmathsetmacro{\yMax}{\ySufR + 0.12}

  \begin{scope}
    \clip (0,0) rectangle (0.56,\yMax);
    \draw[->] (0,0) -- (0,\yMax);
    \draw[->] (0,0) -- (0.56,0);
    \draw[gray,densely dotted] (1/4,0)--(1/4,\yMax);
    \draw[gray,densely dotted] (1/2,0)--(1/2,\yMax);
    \draw[ultra thick] (1/4,\ySufR) -- (1/2,\ySufR);
    \draw[dashed,thick,domain=1/4:1/2] plot(\x,{0.5*(3 - \betaR)*\x});
    \draw[dotted,thick,domain=1/4:1/2] plot(\x,{1 - (\betaR + 1)*\x});
    \draw[gray,densely dotted] (\xkR,0) -- (\xkR,\ykR);
  \end{scope}
  \node[right]      at (0.56,0) {$\tfrac{1}{p}$};
  \node[below left] at (0,\yMax) {$\gamma$};
  \node[below]      at (1/4,0) {$\tfrac{1}{4}$};
  \node[below]      at (1/2,0) {$\tfrac{1}{2}$};
  \node[below]      at (\xkR,0) {$\tfrac{2}{\beta+5}$};
  \filldraw (\xkR,\ykR) circle(0.009);
\end{tikzpicture}
	\vspace{-0.2cm}
\caption{$1\le \beta \le 3$}
\end{subfigure}
\hfill
\begin{subfigure}[b][2cm][c]{0.46\textwidth}
\centering
\begin{tikzpicture}[xscale=11,yscale=2.5]
  \pgfmathsetmacro{\betaL}{0.6}
  \pgfmathsetmacro{\ySufL}{(3 - \betaL)/4}
  \pgfmathsetmacro{\xkL}{1/(2*\betaL + 1)}
  \pgfmathsetmacro{\ykL}{\betaL/(2*\betaL + 1)}
  \pgfmathsetmacro{\yMax}{\ySufL + 0.12}

  \begin{scope}
    \clip (0,0) rectangle (0.56,\yMax);
    \draw[->] (0,0) -- (0,\yMax);
    \draw[->] (0,0) -- (0.56,0);
    \draw[gray,densely dotted] (1/4,0)--(1/4,\yMax);
    \draw[gray,densely dotted] (1/2,0)--(1/2,\yMax);
    \draw[ultra thick,domain=1/4:1/2] plot(\x,{0.5*(2 - \betaL) - (1 - \betaL)*\x});
    \draw[dashed,thick,domain=1/4:1/2] plot(\x,{\betaL*\x});
    \draw[dotted,thick,domain=1/4:1/2] plot(\x,{1 - (\betaL + 1)*\x});
    \draw[gray,densely dotted] (\xkL,0) -- (\xkL,\ykL);
  \end{scope}
  
  \node[right]      at (0.56,0) {$\tfrac{1}{p}$};
  \node[below left] at (0,\yMax) {$\gamma$};
  \node[below]      at (1/4,0) {$\tfrac{1}{4}$};
  \node[below]      at (1/2,0) {$\tfrac{1}{2}$};
  \node[below]      at (\xkL,0) {$\tfrac{1}{2\beta+1}$};
  \filldraw (\xkL,\ykL) circle(0.009);
\end{tikzpicture}
\vspace{-0.2cm}
\caption{$0\le \beta \le 1$}
\end{subfigure}

\vspace{6mm}
\caption{Sufficient (solid) and necessary (dotted) thresholds for \Cref{FLSS_par}.}
\label{fig:par_two_panels}
\end{figure}
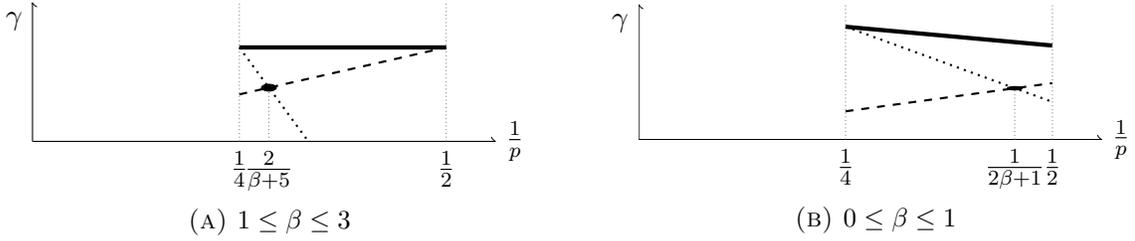

We compare the $p=2$ case of \Cref{FLSS_par} with known \textit{weighted Strichartz estimates}. For the purpose, we consider the Morrey-Campanato type classes, which generalize the $L^q$ space. Given $\delta>0$ and $1\le q \le 3/\delta$, we define $\mathfrak L_{\text{par}}^{\delta,q}$ to be the set of nonnegative weights $H\in L_{\text{loc}}^q(\R\times\R)$, equipped with the norm
\[
\|H\|_{\mathfrak L_{\text{par}}^{\delta,q}}
:=
\sup_{(x,t)\in \R^{1+1}, r>0}
r^\delta \Big( \frac{1}{r^{3}}
\int_{B_{r, \text{par}}(x,t)} H(y,s)^q\,dyds\Big)^{1/q}.
\]
For instance, $|(x,t)|^{-3/q} \in \mathfrak L_{\text{par}}^{\delta,q}$ for $q<3/\delta$, although it does not belong to $L^q$ space. In fact, 
$L^q=\mathfrak L_{\text{par}}^{\delta,q}$ when $\delta=3/q$, and $L^{q,\infty}\subset \mathfrak L_{\text{par}}^{\delta,q}$ when $\delta<3/q$. Moreover, for $\mu=Hdxdt$, we have
\[[\mu]_{\beta,\text{par}}= \|H\|_{\mathfrak L_{\text{par}}^{3-\beta,1}}.\]

Barcel\'o et al. \cite{BBCRV} established weighted Strichartz estimates of the form
\begin{align}\label{MC-estimate}
\| e^{it\partial_x^2}f\|_{L_{x,t}^2(H(x,t))} \le C \|H\|_{\mathfrak L_{\text{par}}^{2\gamma+2,q}}^{1/2}\|f\|_{\dot{H}^\gamma}
\end{align}
where the exponent $2\gamma+2$ is determined by the scaling invariance. In \cite{BBCRV}, it was shown that \eqref{MC-estimate} holds for $\frac14 \le \gamma< \frac12$ and $1<q\le \frac{3}{2\gamma+2}$ (with higher dimensional analogue). 
See \cite{BBCRV, KS} for the case $0\le \gamma <\frac14$ and \cite{BBCRV2} for results with time-dependent weights $H$.

Note that when $0< \beta\leq 1/2$, \eqref{MC-estimate} yields 
\begin{equation}\label{MC-estimate2}
   \| e^{it\partial_x^2}f\|_{L_{x,t}^2(H(x,t))} \le C \| H\|_{\mathfrak L_{\text{par}}^{3-\beta,q}}^{1/2}\|f\|_{\dot{H}^\gamma} 
\end{equation}
for $\gamma = (1-\beta)/2$ and $1<q\leq \frac{3}{3-\beta}$.
Since $q>1$, \eqref{MC-estimate2} does not seem to  imply estimates \eqref{fls_par} for measures $\mu=Hdxdt$. Nevertheless, \eqref{MC-estimate2} is superior to \Cref{FLSS_par} for the  range $0<\beta\leq 1/2$ in the sense that it provides a global estimate in both space and time and the regularity index $\gamma$ is optimal, matching the necessary condition in \Cref{FLSS_par} $(ii)$.

\subsection{Fractal local smoothing estimates}
We now consider fractal local smoothing estimates for the Schr\"odinger operator with $\alpha$-dimensional measures in the standard (non-parabolic) sense. Let $0\le \alpha \le 2$ and consider a class of Borel measures on $\R^2$ for which 
\begin{align}\label{mu-alpha}
[\mu]_{\alpha} := \sup_{z\in \R^2,\; \rho>0}  \rho^{-\alpha}\mu\big(B_\rho(z)\big) <\infty.
\end{align}
Here, the condition \eqref{mu-alpha} differs slightly from $\inn{\mu}_\alpha$ defined in \eqref{mu_ball}, in that the supremum is taken over all $\rho>0$, rather than $\rho>1$.

In the $L^2$ setting, a variety of results for general fractal measures are known (see, e.g., \cite{Wolff2, Mattila, Oberlin, DuZhang}, but much less is understood beyond the $L^2$ framework.
For product measures, however, optimal results have been obtained by Lee--Lee--Roncal \cite{LLR},
with further developments connected to Assouad dimensions.
Suppose $\nu$ is supported on $[0,1]$ and satisfies
\[
\nu((t-\rho,t+\rho)) \le C\rho^\alpha, \quad (t,\rho)\in \R\times \R_{+}.
\]

In \cite[Theorem 1.6]{LLR}, it is shown that if $\gamma\ge -\frac \alpha 2$, then $H^\gamma$--$L_t^2(d\nu;L_x^2(-1,1))$ estimates holds for $e^{it\partial_x^2} f$.
Moreover, optimal results in higher dimensions and weighted Strichartz-type estimates of the form $H^\gamma$--$L_t^q(d\nu;L_x^r)$ for fractional Schr\"odinger operators are established in \cite{LLR} (see also \cite{BRRS} for analogous results for the wave operator).

For the wave equation, the situation is better understood: not only are the product-type estimates optimal (see \cite{BRRS}), but there are also extensive results on $L^p-L^q$ estimates with respect to more general fractal measures (see \cite{Oberlin, CHL, IKSTU, HKL2, CDK}; see also \cite{Har, HKL} for related results in the case of product measures).

We consider the local smoothing type estimates for the Schr\"odinger operator relative to fractal measure $\mu$:
\begin{align}\label{fls}
\big\| e^{it\partial_x^2}f \big\|_{L^p(\R\times [0,1],\mu)}
\le C [\mu]_\alpha^{1/p} \|f\|_{L_\gamma^p(\R)}
\end{align}
for some $C=C_{\alpha,p,\gamma}$.
By combining the weighted square function estimates in \Cref{cor:sqweighted} with the standard strategy, we obtain the following.

\begin{theorem}\label{FLSS}
Let $0\le \alpha \le 2$, and let $\mu$ be a Borel measure on $\R^2$ such that $[\mu]_\alpha \le1$.
\vspace{-.2cm}
\begin{enumerate}[(i)]
\item
\textit{(Sufficiency)}
For $2\le p\le 4$, the estimate \eqref{fls} holds whenever
\[
  \gamma>\gamma(\alpha,p):=
  \begin{cases}
		\frac{2-\alpha}{2}, & \alpha\in[1,2],\\[.8ex]
\frac{2-\alpha}{2}+\frac{\alpha-1}{p}, & \alpha\in[0,1].
  \end{cases}
\]
\item
\textit{(Necessity)}
Conversely, \eqref{fls} can hold only if
\[
  \gamma\ge
  \begin{cases}
  \max\!\big(1-\frac{2\alpha}{p},\,\frac{2-\alpha}{p}\big), & \alpha\in[1,2],\\[.8ex]
\max\!\big(1-\frac{\alpha+1}{p},\,\frac{\alpha}{p}\big), & \alpha\in[0,1].
  \end{cases}
\]
\end{enumerate}
\end{theorem}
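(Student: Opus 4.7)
The sufficiency in part (i) can be approached via a Littlewood--Paley decomposition combined with parabolic rescaling, with \Cref{cor:SchFreqLocal} as the main engine. First I would write $f=\sum_k f_k$ with $\widehat{f_k}$ supported in $\{|\xi|\sim 2^k\}$, and, using the triangle inequality together with a H\"older-in-$k$ summation at the cost of an $\epsilon$ loss in regularity, reduce to proving the frequency-localized bound
\[
\|e^{it\partial_x^2}f_k\|_{L^p(\R\times[0,1],\,d\mu)}\;\lesssim\;2^{k\gamma(\alpha,p)}\|f_k\|_{L^p(\R)}
\]
for every $k\ge 0$. Setting $R=2^{2k}$ and rescaling parabolically by $\Phi(x,t)=(R^{1/2}x,Rt)$, I would set $g(x')=R^{1/2}f_k(R^{1/2}x')$ so that $\widehat g$ is supported in $[-1,1]$, and let $\mu_k:=\Phi_{*}\mu$. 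The change of variables gives $\|e^{it\partial_x^2}f_k\|_{L^p(d\mu)}^p\sim\int_{\R\times[0,R]}|\bU_R g|^p\,d\mu_k$, up to the $\eta$-cutoff.

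The core computation is to track how $[\mu]_\alpha\le1$ transforms under the anisotropic pushforward. Since $\Phi^{-1}(B_\rho)$ is an ellipse with semi-axes $\rho R^{-1/2}$ and $\rho R^{-1}$, covering it by Euclidean balls of the optimal radius and applying $[\mu]_\alpha\le 1$ (valid for every $\rho>0$) yields
\[
\mu_k(B_\rho(z'))\;\lesssim\;\rho^\alpha\cdot\begin{cases}R^{1/2-\alpha},&\alpha\in[1,2],\\[.3ex] R^{-\alpha/2},&\alpha\in[0,1],\end{cases}
\]
for every $\rho>0$. Consequently $\inn{\mu_k}_\alpha$ obeys the same bound; feeding this into \Cref{cor:SchFreqLocal} (possibly with an intermediate dimension $\alpha'$ to optimize when $\alpha\in[0,1]$) and combining with the rescaling identity $\|g\|_{L^p}=R^{1/2-1/(2p)}\|f_k\|_{L^p}$ produces a frequency-localized bound of the form $R^{\gamma_0(\alpha,p)/2}\|f_k\|_{L^p}$. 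A direct calculation shows $\gamma_0(\alpha,2)=(2-\alpha)/2$ for $\alpha\in[1,2]$, matching the target at the $L^2$ endpoint; for $\alpha\in[0,1]$ the analogous computation with an intermediate $\alpha'$ reproduces $(2-\alpha)/2+(\alpha-1)/p$.

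The main obstacle is the $L^4$ endpoint for $\alpha\in(1,2)$: the naive pullback-and-Corollary argument alone produces $\gamma_0=(3-\alpha)/2$ rather than the desired $(2-\alpha)/2$, reflecting the fact that \Cref{cor:SchFreqLocal} is not tight at $p=4$ for Lebesgue-like weights. Closing this gap requires returning to the finer wave envelope estimate \Cref{thm:sqweighted}, whose multiscale sum $\sum_{s,\tau,U}\kappa_{p,H}(U)^p|U|^{1-p/2}\|(\sum_{\theta\subset\tau}|f_\theta|^2)^{1/2}\|_{L^2(w_U)}^p$ encodes the concentration of $\mu_k$ along thin strips after rescaling, and crucially retains the factor $(H(U)/|U|)^{1/p-1/4}$ that the $\max_U$ step in the derivation of \Cref{cor:sqweighted} discards. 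For the necessity in (ii), I would construct in \Cref{examp} explicit saturating examples: the bound $\gamma\ge 1-2\alpha/p$ (resp.\ $1-(\alpha+1)/p$) is witnessed by a superposition of wave packets with frequencies in an arithmetic progression combined with $\mu$ concentrated on the corresponding bundle of wave-packet tubes, while the fractal-type bound involving $(2-\alpha)/p$ (resp.\ $\alpha/p$) comes from a single wave packet paired with a $1$-dimensional measure on a tilted tube of suitable angle.
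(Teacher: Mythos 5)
Your sufficiency argument starts on the same track as the paper: Littlewood--Paley reduction, parabolic rescaling $(x,t)\mapsto(Rx,R^2t)$ (your $\Phi$ with $R=2^{2k}$ is the paper's rescaling with $R=2^k$), and the covering computation for the pushforward measure, which correctly reproduces the paper's \Cref{lem:measure}: $\inn{\mu_R}_\alpha\les R^{1-2\alpha}$ for $\alpha\in[1,2]$ and $\les R^{-\alpha}$ for $\alpha\in[0,1]$ in the paper's normalization. The problem is your claim that this route fails at the $L^4$ endpoint for $\alpha\in(1,2)$, producing $(3-\alpha)/2$ instead of $(2-\alpha)/2$. That obstruction is spurious; it comes from an arithmetic slip, most likely from not feeding the smallness $\inn{\mu_R}_\alpha\les R^{1-2\alpha}\ll 1$ back into \Cref{cor:SchFreqLocal} via its homogeneity in the measure. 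Writing $\theta=2\alpha-1$ and applying \Cref{cor:SchFreqLocal} (at scale $R^2$) to the normalized measure $R^{\theta}\mu_R$ with $\inn{R^\theta\mu_R}_\alpha\les1$, one gets
\begin{equation*}
\big\|e^{it\partial_x^2}f\big\|_{L^p(\mu)}\les R^{-\frac{\theta}{p}}\big\|\bU_{R^2}f_R\big\|_{L^p(R^\theta\mu_R)}\less R^{-\frac{2\alpha-1}{p}}\,(R^2)^{\frac1p-(2-\alpha)(\frac1p-\frac14)}\,R^{\frac1p}\|f\|_{L^p},
\end{equation*}
and the exponent is $\frac{4-2\alpha}{p}-\frac{2(2-\alpha)}{p}+\frac{2-\alpha}{2}=\frac{2-\alpha}{2}$ for \emph{every} $2\le p\le4$; at $p=4$ the factor $(2-\alpha)(\frac1p-\frac14)$ vanishes but the prefactor $R^{-(2\alpha-1)/4}$ supplies exactly the missing $\alpha$-dependence. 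So there is no need to return to \Cref{thm:sqweighted}, and the detour you propose is in any case left entirely unexecuted, which means that as written your argument does not close the key sharp case $p=4$, $\alpha\in(1,2)$. (Also, for $\alpha\in[0,1]$ no ``intermediate dimension $\alpha'$'' is needed: applying the corollary with dimension $\alpha$ itself and $\theta=\alpha$ already yields $\frac{2-\alpha}{2}+\frac{\alpha-1}{p}$.)

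The necessity part of your proposal is only a gesture, and the examples you describe do not quite match what is required. The lower bound $1-\frac{2\alpha}{p}$ comes (after rescaling and the measure transformation of \Cref{lem:measure}) from the time-reversed focusing datum $\hat f(\xi)=e^{-iR\xi^2}\psi(\xi)$ with $\mu$ the restriction of Lebesgue measure to a unit square where $|\bU_Rf|\gtrsim1$ -- not from wave packets in an arithmetic progression (that construction is the one in \Cref{sec:BBCRV}, used for the square-function lower bound, not here). The bound $\frac{2-\alpha}{p}$ does come from a single wave packet on the tilted slab $G=\{|x-2t|\le cR^{1/2},|t|\le R\}$, but the measure is the \emph{normalized two-dimensional} restriction $R^{\frac{\alpha-2}{2}}1_G\,dxdt$ (so that $\inn{\mu}_\alpha\le1$ with $\mu(G)\sim R^{\frac{\alpha+1}{2}}$), not a one-dimensional measure on the tube; with your choice the $\alpha$-dependence of the lower bound would come out wrong. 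These computations, and the rescaling step converting lower bounds for $\zeta$ in \eqref{global_LS} into lower bounds for $\gamma$ in \eqref{fls}, need to be carried out to establish part (ii).
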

 The sufficient conditions $\gamma(\alpha,p)$ are essentially sharp for $p=4$ for all $\alpha\in[0,2]$ and $p=2$ for $\alpha\in [1,2]$.
We prove the sufficiency part of \Cref{FLSS_par} in \Cref{sec:ls} and the necessity part in \Cref{examp}.

\section{More examples}\label{sec:MoreExamples}
\subsection{A lower bound for the weighted square function estimate}\label{sec:BBCRV}

We give a lower bound for the weighted square function estimate by using an example from \cite{BBCRV0} discussed in \Cref{remark:lowerBoundSquare}.

We fix a parameter $0< \kappa\leq 1/2$. For each $l\in R^{-\kappa} \Z \cap [-1/2,1/2]$, let $\Omega_l = [l-R^{-1},l+R^{-1}]$ and
\[f_l(x,t) = \eta(R^{-1}t)\eta(R^{-1}x) R\int e^{i(x\xi+t\xi^2)}  1_{\Omega_l}(\xi)d\xi,\]
where $\eta$ is defined in \Cref{sec:etaDefn}.
It follows that $|f_l(x,t)| \sim 1$ on $B_R$ and decays rapidly away from $B_R$. Thus, 
\[ \big\| \big(\sum_l |f_l|^2 \big)^{1/2}\big\|_{L^p} \les R^{\kappa/2} R^{2/p}.   \]
Let $\Omega = \cup_l \Omega_l$ and $f= \sum_{l} f_l$. For a sufficiently small $0<c<1$, define 
\begin{equation}\label{eqn:Gamma}
   \Gamma = (2\pi R^\kappa \Z \times 2\pi R^{2\kappa} \Z ) \cap B_{cR}(0), \;\; Y = \Gamma + B_{c}(0). 
\end{equation}
One can check that $|Y\cap B_\rho| \les \rho^{2-3\kappa}$ for all $\rho \geq 1$. Therefore, if we let $\alpha = 2-3\kappa$, then $H=1_Y$ is an $\alpha$-dimensional weight. 
Moreover, we have $x\xi + t\xi^2 \in 2\pi \Z + B_{0.01}(0)$ whenever $(x,t)\in Y$ and $\xi \in \Omega$. Consequently, 
\[ \| f\|_{L^p(Y)} \sim R^\kappa |Y|^{1/p} \sim R^{\kappa+(2-3\kappa)/p}.\]
Combining these estimates, we get the lower bound
\begin{align}\label{ex_new}
\| f\|_{L^p(Y)} / \big\| \big(\sum_l |f_l|^2 \big)^{1/2}\big\|_{L^p} \gtrsim R^{\kappa(\frac{1}{2} - \frac{3}{p})} = R^{-(2-\alpha)(\frac{1}{p}-\frac{1}{6})}. 
\end{align}

\subsection{A sharp example beyond interpolation}\label{sec:exInterpolation}
In view of \Cref{cor:SchFreqLocal} concerned with $\alpha$-dimensional weights or measures, it seems natural to ask whether our weighted $L^p$-estimates, \Cref{thm:BR} and \Cref{thm:SchFreqLocal} expressed in terms of $\max_{U\in \U} \kappa_{p,H}(U)$, can yield results beyond what can be obtained by interpolating between the $L^2$ and $L^4$ estimates that they provide. The following example shows that the answer is affirmative. For this particular weight, the dominant scale $R^{-1/2}\leq s\leq 1$ for $\max_{U\in \U} \kappa_{p,H}(U)$ depends on the exponent $p$, being either 1 or $R^{-1/2}$.
\begin{ex}\label{ex:Y}
Let $1< \alpha < 2$ and $p_\alpha = 4/(3-\alpha)$. We construct a positive weight $H=1_Y$ in $\R^2$ such that $\inn{H}_\alpha \lesssim1$ and
\begin{equation}\label{eqn:kappaY}
		\max_{U\in \U} \kappa_{p,H}(U)  
		\les \begin{cases}
		R^{-\frac{2-\alpha}{2p}}, &  2\leq p\leq p_\alpha\\[1ex]
		R^{-\frac{3-\alpha}2(\frac1p-\frac14)}, & p_\alpha \leq p\leq 4.
	\end{cases}
\end{equation}
\end{ex}

Note that for the weight in \Cref{ex:Y}, \Cref{thm:BR} and \Cref{thm:SchFreqLocal}  yield $L^p$-estimates which cannot be obtained by interpolating between $L^2$ and $L^4$ estimates for $2<p<4$.

Regarding \Cref{cor:sqweighted}, let $\sigma$ denote the infimum of exponents for which the following estimate holds with the specific weight $H=1_Y$ to be defined: 
\[ \|f\|_{L^p(Hdx)} \less R^\sigma \big\| \big(\sum_\theta |f_\theta|^2 \big)^{\frac12} \big\|_{L^p(\R^2)}. \]
Applying \eqref{eqn:kappaY} to \Cref{cor:sqweighted} yields that
\begin{equation}\label{eqn:zeta}
\sigma \le \max \Big\{ -\frac{2-\alpha}{2p},~ -\frac{3-\alpha}2\Big(\frac1p-\frac14\Big)\Big\}.
 \end{equation}
 Note that the upper bound \eqref{eqn:zeta} is strictly stronger than the one
 \begin{equation}\label{eqn:kappaAlpha1}
 	\sigma \le -(2-\alpha) \Big(\frac{1}{p}-\frac{1}{4}\Big)
 \end{equation}
given by \eqref{eqn:kappaAlpha} for all intermediate $2<p<4$.  

The upper bound \eqref{eqn:zeta} is  sharp for $2\leq p\leq p_\alpha$ and $p=4$. This can be seen from the lower bound
\begin{equation}\label{eqn:zetaLower}
\sigma \geq \max \Big\{ -\frac{2-\alpha}{2p},~ -2\big( \frac{1}{p}-\frac{1}{4} \big) \Big\}.
\end{equation}
The lower bound follows from examples in \Cref{remark:lowerBoundSquare}. Specifically, the unit ball example provides the lower bound $\sigma \geq -2\big( \frac{1}{p}-\frac{1}{4}\big)$, while the single wave packet example yields the lower bound $\sigma \geq  -\frac{2-\alpha}{2p}$ since $Y\subset T$ for a single $R^{1/2}\times R$ tube $T$ with $|Y|/|T| \sim R^{-(2-\alpha)/2}$. On the other hand, the computation from \Cref{sec:BBCRV} yields a lower bound weaker than \eqref{eqn:zetaLower}. 

We compare lower and upper bounds for $\sigma$ in \Cref{fig1}.

\begin{figure}
	\begin{tikzpicture}[scale=12]
		\pgfmathsetmacro{\aval}{3/2}
		\draw[->] (0,-4/20) -- (0,1/15) node[below left] {$\sigma$};
		\draw[->] (0,0) -- (11/20,0) node[right] {$\tfrac{1}{p}$};
		\draw (1/4,0) -- ++(0,0) node[above left] {$\tfrac14$};
		\draw (1/2,0) -- ++(0,0) node[above] {$\tfrac12$};
		\pgfmathsetmacro{\xint}{1/(\aval + 2)}
		\pgfmathsetmacro{\yint}{1/2 - 2*\xint}
		\pgfmathsetmacro{\xk}{(3 - \aval)/4}
		\draw[gray, densely dashed] (\xint,0) -- (\xint,\yint);
		\draw (\xint,0) -- ++(0,0) node[above] {$\tfrac{1}{\alpha+2}$};
		\draw[gray, densely dotted] (1/2,0) -- (1/2,-0.13);
		\draw[gray, dash dot] (\xk,0) -- (\xk,-0.13);
		\draw (\xk,0) -- ++(0,0) node[above] {\textcolor{blue}{$\tfrac{3-\alpha}{4}$}};
		\begin{scope}
			\clip (0,-3/20) rectangle (11/20,1/20);
			\draw[ultra thick,domain=1/4:1/2] plot (\x,{-(2 - \aval)*(\x - 1/4)});
			\draw[dashed,thick,domain=1/4:1/2] plot (\x,{-(2 - \aval)/2 * \x});
			\draw[dotted,thick,domain=1/4:1/2] plot (\x,{1/2 - 2*\x});
			\draw[blue, thick, domain=1/4:\xk] plot (\x,{ - (3 - \aval)/2 * (\x - 1/4) });
			\draw[blue, thick,domain=\xk:1/2] plot (\x,{-(2 - \aval)/2 * \x});
		\end{scope}
		\filldraw[fill=black, draw=black, thick] (\xint,\yint) circle (0.004);
		\begin{scope}
			\small
			\def\xL{0.70}
			\def\xR{0.78}
			\def\yTop{-0.010}
			\def\dy{0.045}
			\draw[ultra thick] (\xL,\yTop) -- (\xR,\yTop) node[right] {\footnotesize \eqref{eqn:kappaAlpha1}};
			\draw[dotted,thick] (\xL,\yTop-\dy) -- (\xR,\yTop-\dy) node[right] {\footnotesize \eqref{eqn:zetaLower}  };
			\draw[blue,thick] (\xL,\yTop-2*\dy) -- (\xR,\yTop-2*\dy) node[right] {\footnotesize \eqref{eqn:zeta}};
			\draw[dashed,thick]   (\xL,\yTop-3*\dy) -- (\xR,\yTop-3*\dy) node[right,align=left] {\footnotesize \eqref{eqn:zetaLower} };
		\end{scope}
	\end{tikzpicture}
	\caption{Blue and black lines denote the upper bounds on  from \eqref{eqn:zeta} and \eqref{eqn:kappaAlpha1}, respectively, while the dotted line indicates the lower bound from \eqref{eqn:zetaLower}.
	\label{fig1}}
\end{figure}
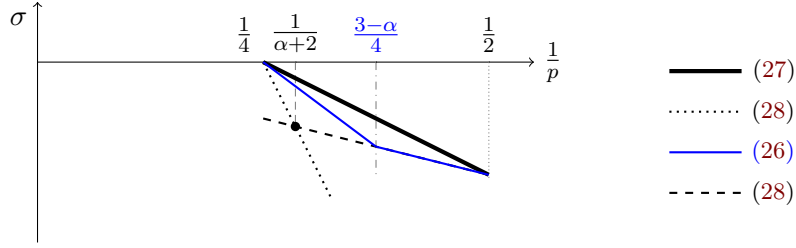

\begin{proof}[\textit{A weight $H$ satisfying \protect\eqref{eqn:kappaY}}]
Let $\kappa\in(0,\frac16)$ and $\alpha:=2-6\kappa\in(1,2)$. 
	Let  \[Y = \Gamma\cap ([0,R^{1/2}]\times [0,R]) + B_c(0),\]
    where $\Gamma$ is defined in \eqref{eqn:Gamma}. Let $H=1_Y$. For any $z\in \R^2$ and $\rho \ge1$, we have
\begin{align*}
&H(B_\rho(z)) = |Y\cap B_\rho(z)| \\
&\lesssim
\begin{cases}
1, & \rho \in [1, R^\kappa],\\[1ex]
\rho/R^\kappa, & \rho \in [R^\kappa, R^{2\kappa}],\\[1ex]
\rho^2/R^{3\kappa}, & \rho \in [R^{2\kappa}, R^{1/2}],
\end{cases}
\qquad
\begin{cases}
(R^{1/2}/R^\kappa)(\rho/R^{2\kappa}), & \rho \in [R^{1/2}, R],\\[1ex]
R^{3/2}/R^{3\kappa}, & \rho \in [R, \infty).
\end{cases}
\end{align*}

It follows that $|Y\cap B_\rho| \les \rho^{2-6\kappa}$ for all $\rho\ge1$.
Thus $H$ is $\alpha=(2-6\kappa)$-dimensional.

Next, we prove \eqref{eqn:kappaY} by computing  $H(T)$ and $H(U)$ directly.
Let $T\in \T_\tau$ and $U\in \U_\tau$ for some $|\tau|=s$. 
We have
\[
\frac{H(U)}{|U|} = \frac{|U\cap Y|}{|U|} \le \frac{|Y|}{|U|} \sim \frac{R^{\frac 32-3\kappa}}{R^2s}.
\]
Regarding $T$, we have 
\begin{align*}
\frac{H(T)}{|T|} = \frac{|T\cap Y|}{|T|} \lesssim
\begin{cases}
	s^{-3}R^{-3\kappa}/s^{-3}, & s\in [R^{-1/2},R^{-\kappa}], \\
	1/s^{-3}, & s \in [R^{-\kappa},1].
\end{cases}
\end{align*}
Therefore,
\begin{align*}
	\max_{U\in \U} \kappa_{p,H}(U) &\les \max\Big( 
	\max_{R^{-\frac12}\le s\le R^{-\kappa}} R^{-\frac {3\kappa} 4} (s^{-1} R^{-(\frac 1 2 + 3\kappa )})^{\frac{1}{p}-\frac{1}{4} },
	\max_{R^{-\kappa}\le s\le 1} s^{\frac 3 4} (s^{-1} R^{-(\frac 1 2 + 3\kappa )})^{\frac{1}{p}-\frac{1}{4} } \Big).
	\end{align*}

Recall that $2\leq p\leq 4$. Substituting $\kappa=(2-\alpha)/6$ yields
\begin{align*}
	\max_{U\in \U} \kappa_{p,H}(U)
		&\lesssim \max\Big(  R^{-\frac {3\kappa} p}, R^{-(\frac 1 2 + 3\kappa )(\frac{1}{p}-\frac{1}{4}) } \Big)
=\max\Big(R^{-\frac{2-\alpha}{2p}},~ R^{-\frac{3-\alpha}2(\frac1p-\frac14)}\Big),
\end{align*}
which is equivalent to \eqref{eqn:kappaY}.
\end{proof}

\section{Weighted wave envelope estimates}\label{sec:bil}
In this section, we prove \Cref{thm:sqweighted} via bilinear restriction theorem.

\subsection{Reductions}
Let $H$ be a positive weight.
We first reduce \Cref{thm:sqweighted} to the special case $H=1_Y$ for a subset $Y\subset \R^2$, namely, 
\begin{equation}\label{eqn:envelopeSet}
	\| f \|_{L^p(Y)}^p  \less \sum_{R^{-1/2} \leq s\leq 1} \sum_{|\tau|=s} \sum_{U\in \U_{\tau}} \kappa_{p,Y}(U)^p |U|^{1-\frac{p}{2}} \| (\sum_{\theta \subset \tau} |f_{\theta}|^2 )^{1/2}  \|_{L^2(w_{U})}^p.
\end{equation}    
Here and in the following, we use the notation $\kappa_{p,Y}(U)$ for $\kappa_{p,1_Y}(U)$ when $Y\subset \R^2$.

Assume that \eqref{eqn:envelopeSet} holds. 
By the assumption on $H$ and a dyadic pigeonholing, there exists a dyadic number $\la\in [R^{-400},1]$ and a subset $Y_\la \subset \R^2$ such that $H(x)\sim \la$ for $x\in Y_{\la}$ and 
\begin{equation}\label{eqn:pigeonholing}
   \| f\|_{L^p(Hdx)}^p \le C(\log R)^{O(1)} \la \| f\|_{L^p(Y_{\la})}^p.
\end{equation}
Moreover, for any $E\subset \R^2$, we have $\lambda|E\cap Y_\la| \sim H(E\cap Y_\la) \leq H(E)$. Hence, by the definition of \eqref{kappa_def},
\begin{equation}\label{eqn:boundKappa}
   \lambda\, \kappa_{p,{Y_{\la}}}(U)^p\les \kappa_{p,H}(U)^p. 
\end{equation}
Using \eqref{eqn:pigeonholing} and \eqref{eqn:boundKappa}, we conclude that \eqref{eqn:envelopeSet} implies \eqref{eqn:envelope}. Therefore, the proof of \Cref{thm:sqweighted} reduces to establishing \eqref{eqn:envelopeSet}.

For the rest of the section, we prove \eqref{eqn:envelopeSet}. 

\subsection{Decomposition}
Let $K\sim \log R$ be a dyadic number. Let $m\in \N$ be the integer such that $K^m \leq R^{1/2} < K^{m+1}$. We consider a canonical covering $\{\tau\}$ such that $|\tau|=s$ for each scale  
\[
s=K^{-j}, \quad j=0,1,\cdots,m.
\]
For each block $\tau$ at an intermediate scale, we define
\[f_{\tau} = \sum_{\theta \subset \tau} f_\theta.\]

We use the following pointwise estimate from \cite[Section 5]{DGW}.
\begin{lemma}\label{ptwise bound}
For any $x\in \R^2$, we have the pointwise bound
\begin{equation}\label{eqn:iteration}
  |f(x)|^p \less \sum_{|\theta|=R^{-1/2}} |f_{\theta}(x)|^{p} + \sum_{1\leq j\leq m} \sum_{|\tau|=K^{-j+1}} \sum_{\substack{\tau_1,\tau_2\subset \tau \\ |\tau_1|=|\tau_2| = K^{-j}  \\ d(\tau_1,\tau_2)\geq   K^{-j}}} |f_{\tau_1}(x)f_{\tau_2}(x)|^{\frac p2},
\end{equation}
where $d(\tau_1,\tau_2)$ denotes the distance between $\tau_1$ and $\tau_2$. 
\end{lemma}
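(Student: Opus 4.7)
The plan is to iterate a Bourgain--Guth-style broad/narrow dichotomy through the $m$ scales $s = 1, K^{-1}, K^{-2}, \ldots, K^{-m}\sim R^{-1/2}$.

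First, I would prove a single-scale step. Fix $x \in \R^2$ and a canonical block $\tau$ with $|\tau|=s \in (R^{-1/2},1]$, and write $f_\tau = \sum_{\tau^* \subset \tau} f_{\tau^*}$ where the children $\tau^*$ have size $K^{-1}s$, so there are $O(K)$ of them. Let $\tau_1^*$ maximize $|f_{\tau^*}(x)|$, set $A := |f_\tau(x)|$ and $M := |f_{\tau_1^*}(x)|$, and let $N(\tau_1^*)$ be the $O(1)$ children within distance $K^{-1}s/2$ of $\tau_1^*$, with $B := \sum_{\tau^* \in N(\tau_1^*)}|f_{\tau^*}(x)|$. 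I split on $A$ versus $2B$. In the \emph{narrow} case $A\leq 2B$, pigeonholing the $O(1)$ terms in $B$ produces some $\tau^*_\bullet \in N(\tau_1^*)$ with $|f_{\tau^*_\bullet}(x)|\gtrsim A$, so $A^p\leq C^p\,|f_{\tau^*_\bullet}(x)|^p$ for an absolute $C$. In the \emph{broad} case $A>2B$, the children outside $N(\tau_1^*)$ contribute at least $A/2$ in total, so another pigeonhole yields $\tau_2^*$ with $d(\tau_1^*,\tau_2^*)\geq K^{-1}s/2$ and $|f_{\tau_2^*}(x)|\geq A/(2K)$; combined with the trivial bound $M\geq A/K$ coming from $A\leq KM$, this gives $|f_{\tau_1^*}(x)f_{\tau_2^*}(x)|^{1/2}\gtrsim A/K$, hence $A^p\lesssim K^p\,|f_{\tau_1^*}(x)f_{\tau_2^*}(x)|^{p/2}$.

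Next I would iterate the dichotomy starting from the full block $\tau_0$ at scale $s=1$. For each $x$ the walk is deterministic: in the narrow case we pass to the chosen $\tau^*_\bullet$ at the next finer scale and repeat; in the broad case we stop and record the bilinear bound at that scale. Either every step is narrow and we arrive at a single $\theta=\theta(x)$ with $|f(x)|^p\leq C^m|f_\theta(x)|^p$, or the first broad step occurs at some scale $s \in (R^{-1/2},1]$, inside some parent $\tau$ of size $s$, producing a well-separated pair $(\tau_1,\tau_2)$ of size $K^{-1}s$ with $|f(x)|^p \lesssim C^m K^p\,|f_{\tau_1}(x)f_{\tau_2}(x)|^{p/2}$, the $C^m$ absorbing the at most $m$ preceding narrow steps. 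Dominating the unique $\theta(x)$ or pair $(\tau_1(x),\tau_2(x))$ selected by the walk by the full sum over all admissible $\theta$, respectively over all scales $s\in(R^{-1/2},1]$, all parents $|\tau|=s$ and all far-apart children pairs, yields \eqref{eqn:iteration}.

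The main obstacle is calibrating the dichotomy so that the broad prefactor is exactly $K^p$ rather than a higher power such as $K^{3p/2}$. The threshold $A$ versus $2B$ is designed precisely for this: it compares the total $A$ directly against the sum over the $O(1)$-size neighborhood of the maximizer, which traps $A$ between $KM$ from above and a constant multiple of $M$ from below whenever $A\leq 2B$, and forces comparable \emph{absolute} lower bounds $\gtrsim A/K$ on both $|f_{\tau_1^*}(x)|$ and $|f_{\tau_2^*}(x)|$ in the complementary case. A naive threshold comparing individual $|f_{\tau^*}(x)|$ to $M$ would produce a spurious $K^{1/2}$ loss in the broad step, while a looser threshold would cost powers of $K$ per narrow step and destroy the $C^m$ factor.
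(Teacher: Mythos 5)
Your proof is correct and follows essentially the same route as the paper's: a Bourgain--Guth broad/narrow dichotomy iterated through the $m$ scales, in which the $O(1)$-neighborhood of the maximizing child governs the narrow case and a pigeonhole among the $O(K)$ separated children yields the $K^p$ factor in the broad case. The only difference is presentational: the paper packages the single-scale step as an abstract inequality for non-negative sequences (\Cref{lem:BG}) and keeps every term additively at each stage, converting maxima into sums at the end, rather than following a single deterministic branch for each point $x$.
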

The inequality follows from the Bourgain--Guth argument \cite{BG}. For the convenience of the reader, we provide a proof in Appendix \ref{sec:pt}. 

By integrating \eqref{eqn:iteration} over $Y$, we get 
\begin{align}\label{eqn:intg}
	\|f\|_{L^p(Y)}^p \less \sum_{\theta} \| f_{\theta} \|_{L^p(Y)}^p + \sum_{1\leq j\leq m} \sum_{|\tau|=K^{-j+1}} \max_{\substack{\tau_1,\tau_2\subset \tau \\ |\tau_1|=|\tau_2| = K^{-j}  \\ d(\tau_1,\tau_2)\geq  K^{-j}}}   \!\!\big\| |f_{\tau_1}|^{\frac12} |f_{\tau_2}|^{\frac12}\big\|_{L^p(Y)}^p,
\end{align}
where we have used the fact that, for each fixed $\tau$, there are at most $K^{O(1)} \less 1$ pairs $\tau_1, \tau_2 \subset \tau$. 

By the computation \eqref{eqn:exKnapp}, the first term on the right-hand side of \eqref{eqn:intg} is bounded by
\[
\sum_{\theta} \| f_{\theta} \|_{L^p(Y)}^p
\lesssim
\sum_{\theta} \sum_{U\in \U_\theta}\frac{|U\cap Y|}{|U|} |U|^{1-\frac{p}{2}} \| f_\theta\|_{L^2(w_U)}^p.
\]
This corresponds to the contribution at the smallest scale $s=R^{-1/2}$ in \eqref{eqn:envelope} (cf. \eqref{eqn:22}).

Thus it remains to control the bilinear terms, which reduces to the following proposition.

\begin{prop}\label{prop:bil-lp}
Let $1\le K \ll R$ and $\tau_1,\tau_2\subset \tau$ satisfy $|\tau_j|=K^{-1}|\tau|$, $j=1,2$, and $d(\tau_1,\tau_2)\ge K^{-1}|\tau|$. Then for any $\epsilon>0$, $2\le p \le 4$ and $U\in \U_{\tau}$, we have
\begin{equation}\label{eqn:localbilinear}
       \big\| |f_{\tau_1}|^{\frac12} |f_{\tau_2}|^{\frac12}\big\|_{L^p(U\cap Y)}^p  \less  
        \,\kappa_{p,Y}(U)^p |U|^{1-\frac{p}{2}} \big\| \big(\sum_{\theta \subset \tau} |f_{\theta}|^2 \big)^{1/2}  \big\|_{L^2(w_{U})}^p.
\end{equation}
\end{prop}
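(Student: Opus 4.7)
The plan is to reduce \eqref{eqn:localbilinear} via parabolic rescaling to its unit-scale counterpart and then combine a localized weighted bilinear $L^2$ restriction estimate with H\"older's inequality, so that the two-scale weight $\kappa_{p,Y}(U)$ arises from two separate integrations against $Y$ --- one at the $U$-scale and one at the $T$-scale.

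First I would apply the affine map $L_\tau^{-1}$ sending $\tau$ to a unit parabolic cap. Under this change of variables, $\tau_j$ become $K^{-1}$-separated $K^{-1}\times K^{-2}$ caps $\widetilde\tau_j$, each $T\in\mathbf T_\tau$ with $T\subset U$ becomes a unit box $\widetilde T$ tiling a rescaled parallelepiped $\widetilde U$ of dimensions $R'\times R'$ with $R':=Rs^2$, each $\theta\subset\tau$ becomes $\widetilde\theta$ of dimensions $(R')^{-1/2}\times (R')^{-1}$, and $Y$ becomes $\widetilde Y$. Every term in \eqref{eqn:localbilinear} --- the $L^p$ and $L^2$ norms, $|U|^{1-p/2}$, and the density ratios $|T\cap Y|/|T|$, $|U\cap Y|/|U|$ defining $\kappa_{p,Y}(U)$ --- transforms by a common Jacobian weight, so the claim reduces to the rescaled unit-scale version.

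In the rescaled setting I would combine three successive estimates. Since $p/2\in[1,2]$, H\"older's inequality on $\widetilde U\cap\widetilde Y$ gives
\[
\bigl\||\widetilde f_{\widetilde\tau_1}\widetilde f_{\widetilde\tau_2}|^{1/2}\bigr\|_{L^p(\widetilde U\cap\widetilde Y)}^p \le |\widetilde U\cap\widetilde Y|^{1-p/4}\,\|\widetilde f_{\widetilde\tau_1}\widetilde f_{\widetilde\tau_2}\|_{L^2(\widetilde U\cap\widetilde Y)}^{p/2},
\]
which produces the factor $(|\widetilde U\cap\widetilde Y|/|\widetilde U|)^{1-p/4}$ of $\kappa_{p,\widetilde Y}(\widetilde U)^p$. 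Since the wave packets of $\widetilde f_{\widetilde\tau_j}$ live at the coarser scale $K\times K^2$, $|\widetilde f_{\widetilde\tau_1}\widetilde f_{\widetilde\tau_2}|^2$ is essentially constant on each unit box $\widetilde T\subset\widetilde U$, so summing over $\widetilde T$ then yields
\[
\|\widetilde f_{\widetilde\tau_1}\widetilde f_{\widetilde\tau_2}\|_{L^2(\widetilde U\cap\widetilde Y)}^2 \lesssim \Bigl(\max_{\widetilde T\subset\widetilde U}\tfrac{|\widetilde T\cap\widetilde Y|}{|\widetilde T|}\Bigr)\,\|\widetilde f_{\widetilde\tau_1}\widetilde f_{\widetilde\tau_2}\|_{L^2(w_{\widetilde U})}^2,
\]
which, after taking the $p/4$-power, contributes the remaining factor $(\max_{\widetilde T}|\widetilde T\cap\widetilde Y|/|\widetilde T|)^{p/4}$ of $\kappa_{p,\widetilde Y}(\widetilde U)^p$. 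Finally, the $K^{-1}$-separation of $\widetilde\tau_1,\widetilde\tau_2$ together with the weighted local bilinear restriction theorem \Cref{thm:bil}, combined with $L^2$-orthogonality of $\{\widetilde f_{\widetilde\theta}\}$ on $w_{\widetilde U}$ and AM--GM across the two sub-caps, would give
\[
\|\widetilde f_{\widetilde\tau_1}\widetilde f_{\widetilde\tau_2}\|_{L^2(w_{\widetilde U})}^2 \less |\widetilde U|^{-1}\,\Bigl\|\bigl(\sum_{\widetilde\theta\subset\widetilde\tau}|\widetilde f_{\widetilde\theta}|^2\bigr)^{1/2}\Bigr\|_{L^2(w_{\widetilde U})}^4.
\]
Multiplying the three estimates and using $|\widetilde U\cap\widetilde Y|^{1-p/4}=|\widetilde U|^{1-p/4}(|\widetilde U\cap\widetilde Y|/|\widetilde U|)^{1-p/4}$, the $|\widetilde U|$-powers collect into $|\widetilde U|^{1-p/2}$ and the two density ratios assemble into $\kappa_{p,\widetilde Y}(\widetilde U)^p$, which is the rescaled \eqref{eqn:localbilinear}.

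The main obstacle is the last of these three estimates: the bilinear bound must deliver the $L^2$-square function on the right (rather than an $L^4$ square function, which is all that a naive Cauchy--Schwarz plus Fefferman's \eqref{eqn:Fef} would yield) and must carry the precise factor $|\widetilde U|^{-1}$ accounting for $|U|^{1-p/2}$ in \eqref{eqn:localbilinear}. This is where the separation $d(\widetilde\tau_1,\widetilde\tau_2)\gtrsim K^{-1}$ is genuinely used, through the weighted localized bilinear restriction theorem \Cref{thm:bil} --- the heart of the argument.
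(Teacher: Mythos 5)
Your proposal is correct and follows essentially the same route as the paper: parabolic rescaling to a cube of side $Rs^2$, H\"older's inequality to extract the $U$-scale density $(|U\cap Y|/|U|)^{1-p/4}$, local constancy of $|g_{\tau_1}g_{\tau_2}|^2$ on unit cubes to extract the $T$-scale density, and finally the weighted local bilinear restriction theorem together with local $L^2$-orthogonality. The only (immaterial) difference is ordering --- the paper first reduces to $p=4$ by H\"older at the original scale and then rescales, whereas you rescale first.
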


Summing \eqref{eqn:localbilinear} over all $U\subset \mathbb U_\tau$ yields
\[ 
   \big\| |f_{\tau_1}|^{\frac12} |f_{\tau_2}|^{\frac12}\big\|_{L^p(Y)}^p  \less \sum_{U\in \U_{\tau}} \kappa_{p,Y}(U)^p |U|^{1-\frac{p}{2}} \big\| \big(\sum_{\theta \subset \tau} |f_{\theta}|^2 \big)^{1/2}  \big\|_{L^2(w_{U})}^p.
\]
This estimate bounds the bilinear terms on the right-hand side of \eqref{eqn:intg} by the right-hand side of \eqref{eqn:envelope}.

\subsection{Proof of \Cref{prop:bil-lp}}
We first reduce the matter to treat the case $p=4$ for \eqref{eqn:localbilinear}, which can be handled by the bilinear restriction theorem.
We claim that \Cref{prop:bil-lp} holds with $p=4$:
\begin{equation}\label{eqn:bilinear2w}
	\big\| |f_{\tau_1}|^{\frac12} |f_{\tau_2}|^{\frac12}\big\|_{L^4(U\cap Y)}  \less
	\kappa_{4,Y}(U) |U|^{-\frac14} \big\| \big(\sum_{\theta \subset \tau} |f_{\theta}|^2 \big)^{1/2}  \big\|_{L^2(w_{U})}.
\end{equation}
Here, by definition \eqref{kappa_def}, when $p=4$ we have
\begin{align}\label{k4}
\kappa_{4,Y}(U)^4=
\max_{\substack{T \in \T_{\tau(U)}:\\ T\subset U}} \frac{|T\cap Y|}{|T|}.
\end{align}

Having established \Cref{prop:bil-lp} for $p=4$,
it remains to treat the range $2\le p\le 4$.
By H\"older's inequality,
\begin{equation}\label{eqn:111}
       \big\| |f_{\tau_1}|^{\frac12} |f_{\tau_2}|^{\frac12}\big\|_{L^p(U\cap Y)}
       \le |U\cap Y|^{\frac1p-\frac14} \big\| |f_{\tau_1}|^{\frac12} |f_{\tau_2}|^{\frac12}\big\|_{L^4(U\cap Y)}.
\end{equation}

We combine \eqref{eqn:bilinear2w} and \eqref{eqn:111}, and observe that
\[
\kappa_{4,Y}(U)|U|^{-\frac14}|U\cap Y|^{\frac1p-\frac14}=\max_{\substack{T \in \T_{\tau(U)}:\\ T\subset U}} \Big(\frac{|T\cap Y|}{|T|}\Big)^{\frac14}\Big(\frac{|U\cap Y|}{|U|}\Big)^{\frac1p-\frac14}|U|^{\frac1p-\frac12}.
\]
This yields the desired estimate \eqref{eqn:localbilinear}, thereby completing the proof of \Cref{thm:sqweighted}. \qed

\subsection{Bilinear restriction estimates}
It remains to establish \eqref{eqn:bilinear2w}. To this end, we apply parabolic rescaling. Fix $\tau$ with $|\tau|=s$ centered at $(c,c^2)$. Let $A_\tau$ denote the affine transform 
\[ A_\tau (\xi_1,\xi_2) = (c,c^2) + (s\xi_1,2cs\xi_1+s^2 \xi_2). \]
Thus $\tau$ can be identified with the image of $[-1,1]\times [-2,2]$ under $A_\tau$. 
For each $\theta\subset \tau$, we define $g_\theta$ by 
\[ \hat{g_\theta}(\xi)=s^3 \hat{f_\theta}(A_\tau \xi) \]
and let 
\[
g_{\tau_i}:=\sum_{\theta \subset \tau_i} g_\theta, \quad i=1,2.
\]
With the new scale $R_s:=Rs^2$, $\widehat{g_\theta}$ is supported on a canonical box of dimensions $R_s^{-1/2}\times R_s^{-1}$ covering the $R_s^{-1}$-neighborhood of the parabola $\mathcal P$. Moreover, the supports of $\hat{g_{\tau_1}}, \hat{g_{\tau_2}}$ are $K^{-1}$-separated. 

We may write $g_\theta(x) = c_\tau(x) f_{\theta}(L_\tau x)$, where $|c_\tau(x)|=1$ and $L_\tau$ is the linear transform 
\begin{equation}\label{eqn:Ltau}
 L_\tau = \begin{pmatrix} s^{-1} & -2cs^{-2} \\ 0 & s^{-2}    
\end{pmatrix}.   
\end{equation}
Let $B=L_\tau^{-1}(U)$, which is a cube of side length $R_s$. 

For any unit cube $q\subset B$ (so $|q|=1$),
recalling the definition \eqref{set of tube} and applying the change of variables $x\rightarrow L_\tau x$,
the quantity $\kappa_{4,Y}(U)$ given in \eqref{k4} can be written as
\[
\kappa_{4,Y}(U)^4=\max_{q\subset B} \frac{|L_\tau(q)\cap Y|}{|L_\tau(q)|}
=\max_{q\subset B} \frac{|q\cap \widetilde Y|}{|q|}
\]
where $\widetilde Y=L_\tau^{-1}(Y)$. 
With this rescaling, and using the identity $|B|^{-1}=|U|^{-1}|\det L_\tau|$, the estimate \eqref{eqn:bilinear2w} reduces to prove the following.

\begin{lemma}\label{lem:l4}
Let $B$ and $g_{\tau_i}$ be as above. Let $q$ be a unit cube contained in $B$. Then
\begin{equation}\label{eqn:sum2}
\int_{B\cap \widetilde Y} |g_{\tau_1}g_{\tau_2}|^2\, \less \max_{q\subset B} \frac{|q\cap \widetilde Y|}{|q|} |B|^{-1}  \| g_{\tau_1} \|_{L^2(w_B)}^{2} \| g_{\tau_2}  \|_{L^2(w_B)}^{2}
\end{equation}
where $w_B$ is an $L^\infty$-normalized weight which decays rapidly away from $B$.
\end{lemma}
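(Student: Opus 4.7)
The plan is to reduce \eqref{eqn:sum2} to the standard bilinear $L^2$ restriction estimate for the parabola, using the fact that $|g_{\tau_1}g_{\tau_2}|^2$ is essentially constant on unit cubes to isolate the role of the set $\widetilde Y$. Since $\widehat{g}_{\tau_i}$ is supported in $N_{R_s^{-1}}(\mathcal{P}_{\tau_i})$ with $\tau_1,\tau_2\subset[-1,1]$, the Fourier support of $|g_{\tau_1}g_{\tau_2}|^2=(g_{\tau_1}g_{\tau_2})\overline{(g_{\tau_1}g_{\tau_2})}$ lies in a ball of radius $O(1)$ centered at the origin. Writing $|g_{\tau_1}g_{\tau_2}|^2=|g_{\tau_1}g_{\tau_2}|^2*\psi$ for a Schwartz $\psi$ with $\widehat{\psi}\equiv 1$ on this ball, I would use the rapid decay of $\psi$ to show that for each unit cube $q\subset B$,
\[
\int_{q\cap \widetilde Y}|g_{\tau_1}g_{\tau_2}|^2 \;\lesssim\; \frac{|q\cap \widetilde Y|}{|q|}\int |g_{\tau_1}g_{\tau_2}|^2\,w_q,
\]
with $w_q$ a Schwartz weight adapted to $q$.

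Summing over the unit cubes $q\subset B$ and factoring out $\kappa:=\max_{q\subset B}|q\cap \widetilde Y|/|q|$ gives
\[
\int_{B\cap \widetilde Y}|g_{\tau_1}g_{\tau_2}|^2 \;\lesssim\; \kappa\int |g_{\tau_1}g_{\tau_2}|^2\,w_B.
\]
The remaining step is to control the right-hand side by the standard bilinear $L^2$ restriction estimate for $K^{-1}$-separated subarcs of the parabola. In two dimensions this is elementary: after a smooth spatial cutoff adapted to $B$ (at the reciprocal scale $R_s^{-1}$ of the Fourier thickness), one applies the change of variables $(t_1,t_2)\mapsto(t_1+t_2,\,t_1^2+t_2^2)$, whose Jacobian $|2(t_1-t_2)|\gtrsim K^{-1}$ is controlled by the separation of $\tau_1,\tau_2$, followed by Plancherel. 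This yields
\[
\int |g_{\tau_1}g_{\tau_2}|^2\,w_B \;\less\; |B|^{-1}\,\|g_{\tau_1}\|_{L^2(w_B)}^2\,\|g_{\tau_2}\|_{L^2(w_B)}^2,
\]
the factor $|B|^{-1}=R_s^{-2}$ being consistent with the Knapp example (two wave packets of dimensions $R_s^{1/2}\times R_s$ intersect in area $\sim R_sK$, each carrying $L^2$ mass $\sim R_s^{3/2}$), and the $K\sim\log R$ loss absorbed into $\less$.

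The main technical obstacle is the spatial localization in the bilinear step: justifying the replacement of the natural global $L^2$ norms of $g_{\tau_i}$ by the weighted norms $\|g_{\tau_i}\|_{L^2(w_B)}$. This is handled via a wave-packet decomposition of $g_{\tau_i}$ on $B$, since only packets whose tubes meet $B$ contribute non-negligibly; equivalently, one smoothly truncates $g_{\tau_i}$ at scale $R_s$, which is compatible with the $R_s^{-1}$ Fourier thickness of $N_{R_s^{-1}}(\mathcal{P}_{\tau_i})$. Concatenating the two displays above then delivers \eqref{eqn:sum2}.
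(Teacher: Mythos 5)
Your argument is correct and follows essentially the same route as the paper: use the compact Fourier support of $|g_{\tau_1}g_{\tau_2}|^2$ to make it locally constant on unit cubes, extract the factor $\max_{q\subset B}|q\cap \widetilde Y|/|q|$ cube by cube, and then bound $\int |g_{\tau_1}g_{\tau_2}|^2 w_B$ by the local bilinear $L^2$ restriction estimate for $K^{-1}$-separated arcs. The only difference is that you sketch the proof of that bilinear input (C\'ordoba's change of variables plus the localization to $B$), whereas the paper simply invokes it as \Cref{thm:bil} with references.
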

By the local $L^2$-orthogonality, for $\tau_1,\tau_2\subset \tau$ we have
\begin{equation}\label{eqn:localorthogonality}
	\| g_{\tau_i}  \|_{L^2(w_B)}  \les  
	\big\| \big(\sum_{\theta \subset \tau} |g_{\theta}|^2 \big)^{1/2} \big\|_{L^2(w_B)}, \quad i=1,2,
\end{equation}
Thus, combining \eqref{eqn:sum2} with \eqref{eqn:localorthogonality}, we obtain \eqref{eqn:bilinear2w}.

It remains to prove \Cref{lem:l4}. For this purpose, we invoke the following local bilinear restriction estimate for the parabola:

\begin{theorem}\label{thm:bil}
Let $1\le K\ll r$ and $B$ be a ball of radius $r$.
Let $\tau_1$ and $\tau_2$ be boxes contained in $N_{r^{-1}}(\cP)$ such that $d(\tau_1,\tau_2)\ge K^{-1}$. If $g_{\tau_1}$ and $g_{\tau_2}$ are Fourier supported on $\tau_1$ and $\tau_2$, respectively, then
\[
\int_B |g_{\tau_1}g_{\tau_2}|^2   \les K^{O(1)} |B|^{-1} \| g_{\tau_1} \|_{L^2(w_B)}^2 \| g_{\tau_2}  \|_{L^2(w_B)}^2,
\]
where $w_B$ is a $L^\infty$-normalized weight which decays rapidly away from $B$.
\end{theorem}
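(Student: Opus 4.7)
The plan is to prove the estimate by transferring to the Fourier side via Plancherel and exploiting the $K^{-1}$-separation of $\tau_1$ and $\tau_2$ as a transversality condition. \emph{First,} I introduce a smooth weight $\phi_B$ adapted to $B$: an $L^\infty$-normalized Gaussian of width $r$ satisfying $\phi_B \ge 1_B$, whose square root $\phi_B^{1/2}$ has Fourier transform essentially supported in a ball of radius $O(r^{-1})$. Setting $\tilde g_i := g_{\tau_i} \phi_B^{1/2}$, the function $\tilde g_i$ is Fourier-supported in an $O(r^{-1})$-thickening of $\tau_i$, hence still inside $N_{O(r^{-1})}(\cP)$ and still $\tfrac12 K^{-1}$-separated (as $K \ll r$). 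Then
\[
\int_B |g_{\tau_1} g_{\tau_2}|^2 \le \int |\tilde g_1 \tilde g_2|^2 = \|\widehat{\tilde g_1} * \widehat{\tilde g_2}\|_{L^2(\R^2)}^2
\]
by Plancherel, reducing matters to a convolution estimate for measures concentrated near two transverse pieces of the parabola.

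\emph{Next,} I parametrize in graph coordinates $(\xi_1, \xi_2) = (\xi_1, \xi_1^2 + \mu)$ and write $\widehat{\tilde g_i}(\xi_1, \xi_1^2 + \mu) = G_i(\xi_1, \mu)$, supported on $I_i \times [-Cr^{-1}, Cr^{-1}]$ for the first-coordinate projection $I_i$ of (the thickening of) $\tau_i$. For each output $\eta = (\eta_1, \eta_2)$, the convolution integrand is nonzero only for $(\xi_1, \mu_1)$ satisfying $\xi_1 \in I_1$, $\eta_1 - \xi_1 \in I_2$, $|\mu_1| \lesssim r^{-1}$, and $|\mu_1 - \psi(\xi_1)| \lesssim r^{-1}$, where $\psi(\xi_1) := \eta_2 - \xi_1^2 - (\eta_1 - \xi_1)^2$. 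The key computation is $\psi'(\xi_1) = -2(\xi_1 - (\eta_1 - \xi_1))$, so $|\psi'| \ge 2\,d(I_1, I_2) \ge 2K^{-1}$ on the admissible set, whose measure in $(\xi_1, \mu_1)$ is therefore at most $O(Kr^{-2})$. Cauchy--Schwarz then yields
\[
|(\widehat{\tilde g_1} * \widehat{\tilde g_2})(\eta)|^2 \lesssim K r^{-2} \int |G_1(\xi_1, \mu_1)|^2 |G_2(\eta_1 - \xi_1, \eta_2 - \xi_1^2 - \mu_1)|^2 \,d\xi_1\,d\mu_1.
\]

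\emph{Finally,} integrating over $\eta$ and performing the unit-Jacobian substitution $\xi_2 = \eta_1 - \xi_1$, $\mu_2 = \eta_2 - \xi_1^2 - \xi_2^2 - \mu_1$ decouples the two factors and gives
\[
\|\widehat{\tilde g_1} * \widehat{\tilde g_2}\|_{L^2}^2 \lesssim K r^{-2} \|G_1\|_{L^2}^2 \|G_2\|_{L^2}^2 \lesssim K |B|^{-1} \|g_{\tau_1}\|_{L^2(w_B)}^2 \|g_{\tau_2}\|_{L^2(w_B)}^2
\]
after another application of Plancherel and by dominating $\phi_B$ by a rapidly decreasing $L^\infty$-normalized weight $w_B$. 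The main obstacle is the transversality step in the previous paragraph: quantitatively converting the separation hypothesis $d(\tau_1,\tau_2)\ge K^{-1}$ into the lower bound $|\psi'|\ge 2K^{-1}$ is the geometric heart of the argument, whereas the outer localization and the final change of variables are routine bookkeeping.
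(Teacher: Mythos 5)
The paper does not actually prove \Cref{thm:bil}; it defers to \cite{TaoR} and \cite[Lemma 2.4]{SLee} as standard. Your argument is a correct, self-contained version of exactly that standard proof: localize by a weight whose square root has compactly supported Fourier transform, pass to $\|\widehat{\tilde g_1}*\widehat{\tilde g_2}\|_{L^2}^2$ by Plancherel, and use the transversality $|\psi'(\xi_1)|\gtrsim K^{-1}$ coming from $d(\tau_1,\tau_2)\ge K^{-1}$ to bound the fiber measure by $O(Kr^{-2})$ before applying Cauchy--Schwarz and a unit-Jacobian change of variables. Two small points to tighten. First, a genuine Gaussian does not have compactly Fourier-supported square root, so "essentially supported" is not literally enough to conclude that $\tilde g_i$ is Fourier supported in an $O(r^{-1})$-thickening of $\tau_i$; the standard fix is to take $\phi_B^{1/2}(x)=\varphi((x-c_B)/r)$ with $\varphi$ Schwartz, $\widehat{\varphi}$ supported in the unit ball, and $|\varphi|\gtrsim 1$ on $B(0,1)$, after which everything you wrote goes through verbatim (and $\phi_B\les w_B$). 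Second, in the displayed Cauchy--Schwarz step the second argument of $G_2$ should be $\eta_2-\xi_1^2-\mu_1-(\eta_1-\xi_1)^2$ in your graph coordinates; your final substitution $\mu_2=\eta_2-\xi_1^2-\xi_2^2-\mu_1$ is the correct one, so this is only a typo. With these repairs the proof is complete and in fact gives the bound with $K^{O(1)}$ replaced by $K$.
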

The proof of Theorem \ref{thm:bil} is standard; see, for example, \cite{TaoR} or \cite[Lemma 2.4]{SLee}.

We now turn to the proof of \Cref{lem:l4}.

\begin{proof}[Proof of \Cref{lem:l4}]
Since $|g_{\tau_1}|^2 |g_{\tau_2}|^2$ has compact Fourier support, it is locally constant on unit cubes. 
More precisely, we may choose $\phi= \phi_N:= (1+|\cdot |)^{-N}$ for some sufficiently large $N \in \N$ such that
\[
|g_{\tau_1}|^2 |g_{\tau_2}|^2\les |g_{\tau_1}|^{2} |g_{\tau_2}|^{2} * \phi.
\]
Note that the convolution $|g_{\tau_1}|^{2} |g_{\tau_2}|^{2} * \phi $ is locally constant on unit cubes in the sense that  $|g_{\tau_1}|^{2} |g_{\tau_2}|^{2} * \phi(x) \sim |g_{\tau_1}|^{2} |g_{\tau_2}|^{2} * \phi(y)$ whenever $|x-y|\les 1$. Hence, for each unit cube $q\subset B$,
\[
\int_{q\cap \widetilde Y} |g_{\tau_1}g_{\tau_2}|^2
\les  \frac{|q\cap \widetilde Y|}{|q|} \int_{q}  |g_{\tau_1}|^{2} |g_{\tau_2}|^{2} * \phi (x) dx. \]
Summing this inequality over $q\subset B$ yields
\begin{equation}\label{eqn:sum1}
\int_{B\cap \widetilde Y} |g_{\tau_1}g_{\tau_2}|^2
\les \max_{q\subset B} \frac{|q\cap \widetilde Y|}{|q|} \int_{B}  |g_{\tau_1}|^{2} |g_{\tau_2}|^{2} * \phi (x) dx. 
\end{equation}

The integral on the right-hand side of \eqref{eqn:sum1} can be written as 
\[ \int |g_{\tau_1}g_{\tau_2}|^2 \phi*1_B,   \] 
where $\phi*1_B$ is a $L^\infty$-normalized weight which decays rapidly away from $B$. Using this decay property and applying \Cref{thm:bil} to \eqref{eqn:sum1}, we get the desired bound in Lemma \ref{lem:l4}.
\end{proof}

\section{Fractal local smoothing estimates}\label{sec:ls}
In this section, we establish the sufficiency parts in \Cref{FLSS_par} and \Cref{FLSS}. 
\subsection{Rescaling and reductions}\label{sec:rescale}
We reduce \Cref{FLSS_par} and \Cref{FLSS} to \Cref{cor:SchFreqLocal}. By homogeneity, we may assume $[\mu]_{\alpha} = 1$ and $[\mu]_{\beta,\text{par}} = 1$ for the proofs of \Cref{FLSS_par} and \Cref{FLSS}, respectively. 

Fix $2\leq p \leq 4$. Given a measure $\mu$, we need to verify
\[
	\big\| e^{it\partial_x^2}f  \big\|_{L^p(\R\times[0,1];\mu)}
	\les  \|f\|_{L^p_\gamma(\R)}
\]
for all $\gamma>\gamma_{\text{par}}(\beta)$ and $\gamma>\gamma(\alpha,p)$ respectively.  By a standard Littlewood--Paley reduction, it suffices to show that 
\[ 
\| e^{it\partial_x^2}f \|_{L^p(\R\times[0,1];\mu)}
\less R^{\gamma} \|f\|_p
\]
for any function $f$ whose Fourier transform is supported on  $\{\xi\in\R: |\xi| \le R\}$.

For the purpose, we define $f_R(x) = f(R^{-1}x)$ so that $\hat{f_R}$ is supported on $[-1,1]$ and $\| f_R \|_p = R^{\frac1p}\|f\|_p$.  A change of variable $\xi\rightarrow R\xi$ gives
\begin{align*}
	|e^{it\partial_x^2}f(x)| \sim |\mathbf{U}_{R^2} f_R (Rx, R^2 t)| , \;\; (x,t) \in \R \times [0,1],
\end{align*}
where $\mathbf{U}_{R^2}$ is defined in  \eqref{eqn:defnU_R}. 
We also define the rescaled measure $\mu_R$ on $\R^2$ by $\mu_R(E) = \int 1_E(Rx,R^2 t) d\mu(x,t)$ so that we have  
\[
	\int |\mathbf{U}_{R^2} f_R|^p(x,t)\,d\mu_R(x,t) = \int |\mathbf{U}_{R^2} f_R|^p (Rx, R^2 t)\,d\mu(x,t).
\]
Thus, we have 
\begin{equation}\label{eqn:reductionFLS}
    \big\| e^{it\partial_x^2}f \big\|_{L^p(\R\times[0,1];\mu)}
\les \big\| \bU_{R^2} f_R \big\|_{L^p(\R^2; \mu_R)}.
\end{equation}

\begin{lemma}\label{lem:measure} For $\mu_R$ defined as above, we have 
\vspace{-.2cm}
	\begin{enumerate}
    	\item If $[\mu]_{\beta,\text{par}}\le 1$, then 
		\begin{align*}
			\inn{\mu_R}_{(\beta+1)/2} &\les R^{-\beta},  \;\;\;  \beta \in [1,3], \\
			\inn{\mu_R}_{\beta} &\les R^{-\beta}, \;\;\; \beta\in[0,1].
		\end{align*}
		\item If $[\mu]_{\alpha}\le 1$, then 
		\[ \inn{\mu_R}_{\alpha}  \les   \begin{cases}
								R^{1-2\alpha},  & \alpha \in [1,2],\\
			R^{-\alpha},  & \alpha \in [0,1].
		\end{cases}. \]
	\end{enumerate}
\end{lemma}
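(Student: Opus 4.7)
The plan is to bound $\mu_R(B_\rho(z))$ for $\rho\ge 1$ by transferring back to the original measure via the anisotropic change of variables $(y,s)\mapsto (Ry,R^2s)$. By the definition of $\mu_R$, the ball $B_\rho(z)\subset\R^2$ pulls back to the set $\{(y,s):(Ry-z_1)^2+(R^2s-z_2)^2\le\rho^2\}$, which is contained in an anisotropic box $I_1\times I_2$ centered at $(z_1/R,z_2/R^2)$ of side lengths $|I_1|\sim \rho/R$ and $|I_2|\sim \rho/R^2$. Thus $\mu_R(B_\rho(z))\le \mu(I_1\times I_2)$, and the whole lemma reduces to estimating $\mu(I_1\times I_2)$ efficiently by covering $I_1\times I_2$ with either parabolic balls (Part 1) or Euclidean balls (Part 2) and applying the hypothesis $[\mu]_{\beta,\text{par}}\le 1$ or $[\mu]_\alpha\le 1$.

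For Part 1, a parabolic ball of radius $r$ has sides $2r\times 2r^2$. Since $\rho\ge 1$ forces $\rho/R\ge (\rho/R^2)^{1/2}$, there are two natural strategies. First, a single parabolic ball of radius $r=\rho/R$ covers $I_1\times I_2$, giving $\mu(I_1\times I_2)\le (\rho/R)^\beta=\rho^\beta R^{-\beta}$. Second, using a column of $\sim \rho^{1/2}$ parabolic balls of radius $r=\rho^{1/2}/R$ gives $\mu(I_1\times I_2)\lesssim \rho^{1/2}\cdot r^\beta=\rho^{(\beta+1)/2}R^{-\beta}$. Comparing the two exponents in $\rho$: the single-ball bound is sharper precisely when $\beta\le 1$, yielding $\inn{\mu_R}_\beta\lesssim R^{-\beta}$; the multi-ball bound is sharper when $\beta\ge 1$, yielding $\inn{\mu_R}_{(\beta+1)/2}\lesssim R^{-\beta}$. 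A brief check shows that no intermediate radius $r\in[\rho^{1/2}/R,\rho/R]$ improves on these extremes (the relevant exponent in $r$ changes monotonicity exactly at $\beta=1$).

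For Part 2, the same idea applies with Euclidean balls, whose sides are $2r\times 2r$. The two extreme covers are a single ball of radius $r=\rho/R$, giving $(\rho/R)^\alpha=\rho^\alpha R^{-\alpha}$, and a row of $\sim R$ balls of radius $r=\rho/R^2$ (the shorter side of the box), giving $R\cdot(\rho/R^2)^\alpha=\rho^\alpha R^{1-2\alpha}$. Their ratio is $R^{\alpha-1}$, so the single-ball cover wins for $\alpha\le 1$ and the multi-ball cover wins for $\alpha\ge 1$, yielding the two stated bounds. As before, intermediate radii $r\in[\rho/R^2,\rho/R]$ lead to $r_1 r^{\alpha-1}$, which is monotone in $r$ with a switch at $\alpha=1$, so no gain is possible there.

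There is no real obstacle; the argument is essentially a case analysis of how to cover an anisotropic box optimally by isotropic or parabolically-scaled balls. The only care needed is (i) to ensure covers are valid (checking $r\ge r_1$ and $r^2\ge r_2$, respectively $r\ge\max(r_1,r_2)$, for single-ball covers) and (ii) to verify that the two natural strategies already give the stated thresholds, so that intermediate scales do not need to be considered. Since the supremum in $\inn{\cdot}_\alpha$ is over $\rho\ge 1$, we do not have to deal with the $\rho<1$ regime, which slightly simplifies the bookkeeping.
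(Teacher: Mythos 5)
Your proposal is correct and follows essentially the same route as the paper: transfer $B_\rho(z)$ to the anisotropic rectangle of dimensions $\sim \rho/R \times \rho/R^2$, then compare the single-ball cover against the cover by $\sim\rho^{1/2}$ parabolic balls of radius $\rho^{1/2}/R$ (resp.\ $\sim R$ Euclidean balls of radius $\rho/R^2$), with the crossover at $\beta=1$ (resp.\ $\alpha=1$). The extra remark about intermediate radii is harmless but not needed.
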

\begin{proof}
Consider the ball $B_\rho(z)$ of radius $\rho\geq 1$ for $z=(z_1,z_2)$.
By the definition of $\mu_R$, we have  
\[
\mu_R\big(B_\rho(z)\big) \leq \mu\Big( \big(\frac{z_1-\rho}R,\frac{z_1+\rho}{R}\big)\times \big(\frac{z_2-\rho}{R^2},\frac{z_2+\rho}{R^2}\big)
\Big).
\]

Suppose that $[\mu]_{\beta,\text{par}}=1$. 
We may cover the rectangle 
$\big(\frac{z_1-\rho}R,\frac{z_1+\rho}{R}\big)\times \big(\frac{z_2-\rho}{R^2},\frac{z_2+\rho}{R^2}\big)$
by $O(\rho^{1/2})$ parabolic rectangles of dimensions $\rho^{1/2}/R \times \rho/R^2$. Alternatively, we can just cover it by a parabolic rectangle of dimensions $\rho/R \times (\rho/R)^2$. Therefore, we have 
\[ 
   \mu_R\big(B_\rho(z)\big) \les \min\big( \rho^{\frac12} \big(\rho^{\frac12}/R\big)^{\beta}, (\rho/R)^\beta \big) = R^{-\beta} \min( \rho^{\frac{1+\beta}2},\rho^\beta),
\]
which verifies the claim. 

Next, assume that $[\mu]_{\alpha}=1$. We may cover the rectangle $\big(\frac{z_1-\rho}R,\frac{z_1+\rho}{R}\big)\times \big(\frac{z_2-\rho}{R^2},\frac{z_2+\rho}{R^2}\big)$
by a single ball of radius $\sim \rho/R$, or alternatively, $O(R)$ balls of radius $\rho/R^2$. Thus, 
\[ 
   \mu_R\big(B_\rho(z)\big) \les \min\big( R \big(\rho/R^2\big)^\alpha , \big(\rho/R\big)^\alpha\big)  = \min\big( R^{1-2\alpha}, R^{-\alpha}\big) \rho^\alpha. \qedhere
\]
\end{proof}

\subsection{Proof of sufficient conditions in \Cref{FLSS_par} and \Cref{FLSS}}
We prove sufficient conditions in \Cref{FLSS_par} and \Cref{FLSS}. 
Let $f$ be a function whose Fourier transform is supported on  $[-R,R]$, $R\geq 1$. 
By the reduction in \eqref{eqn:reductionFLS}, we have
\begin{align}\label{newtheta}
\| e^{it\partial_x^2}f \|_{L^p(\R\times[0,1];\mu)} \les R^{-\frac{\theta}{p}} \| \bU_{R^2} f_R \|_{L^p(\R^2, R^\theta \mu_R)}
\end{align}
where $\theta\in \R$ is to be chosen depending on the measure $\mu$.

\subsubsection{Proof of \Cref{FLSS_par}}
Suppose that $[\mu]_{\beta,\text{par}}= 1$.
By \Cref{lem:measure}, we have $\inn{R^\beta \mu_R}_{(\beta+1)/2} \les 1$ when $\beta\in[1,3]$ and $\inn{R^\beta \mu_R}_{\beta} \les 1$ when $\beta\in[0,1]$.
Thus, \Cref{cor:SchFreqLocal} yields
\begin{align*} 
\big\| \bU_{R^2} f_R \big\|_{L^p(\R^2, R^\beta \mu_R)} 
\less 
\begin{cases}
R^{2\left(\frac{1}{p}-(2-\frac{\beta+1}{2})\left(\frac{1}{p}-\frac{1}{4}\right)\right)} \| f_R\|_{L^p}
=R^{\frac{3-\beta}4+\frac{\beta-1}p}\|f_R\|_{L^p}, &\quad \beta\in [1,3],\\
R^{2\left(\frac{1}{p}-(2-\beta)\left(\frac{1}{p}-\frac{1}{4}\right)\right)} \| f_R\|_{L^p}
=R^{\frac{2-\beta}2-\frac{2(1-\beta)}p}\|f_R\|_{L^p}, &\quad \beta\in[0,1].
\end{cases}
\end{align*}
Recalling \eqref{newtheta} with $\theta=\beta$ and $\| f_R\|_{p} = R^{1/p} \|f\|_p$, we get 
\begin{align*}
\| e^{it\partial_x^2}f \|_{L^p(\R\times[0,1];\mu)} 
\less
\begin{cases}
R^{\frac{3-\beta}{4}} \| f\|_{L^p}, \quad & \beta\in[1,3],\\[.8ex]
R^{\frac{2-\beta}{2} - \frac{1-\beta}{p}} \| f\|_{L^p}, \quad & \beta\in[0,1].
\end{cases}
\end{align*}
This completes the proof of the sufficiency part in \Cref{FLSS_par}. \qed

\subsubsection{Proof of \Cref{FLSS}}

The proof the sufficient condition in \Cref{FLSS} for $\alpha\in [0,1]$ is the same as the proof for the case $\beta \in [0,1]$ of \Cref{FLSS_par}. For $\alpha\in [1,2]$, by \Cref{lem:measure}, we have $\inn{R^{2\alpha-1} \mu_R}_{\alpha} \les 1$. Thus, \Cref{cor:SchFreqLocal} yields
  \[ \big\| \bU_{R^2} f_R \big\|_{L^p(\R^2, R^{2\alpha-1} \mu_R)} \less R^{2\left(\frac{1}{p}-(2-\alpha)\left(\frac{1}{p}-\frac{1}{4}\right)\right)} \| f_R\|_{L^p}.   \]
     Consequently, by \eqref{newtheta} we have  
    \[  \big\| e^{it\partial_x^2}f \big\|_{L^p(\R\times[0,1];\mu)} \les R^{-\frac{2\alpha-1}{p}} \| \bU_{R^2} f_R \|_{L^p(\R^2,R^{2\alpha-1} \mu_R)}  \less R^{\frac{2-\alpha}{2}-\frac1p} \| f_R\|_{L^p}
    =R^{\frac{2-\alpha}{2}} \| f\|_{L^p}. \]
This completes the proof. \qed

\subsection{A sketch of the proof of \eqref{sqfc}} \label{sec:sqfc}
We restate \eqref{sqfc} with replacing $R$ by $R^2$:
\begin{prop}\label{prop:sqfc}
   For $2\le p \le 4$, we have
\begin{align}\label{eq0}
\Big\| \big(\sum_J |\mathbf U_{R^2} f_J|^2\big)^{1/2} \Big\|_{L^p}
\less R^{\frac2p}\|f\|_{L^p}.
\end{align}
\end{prop}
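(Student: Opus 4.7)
My plan is to prove the two endpoints $p=2$ and $p=4$ of \eqref{eq0} separately and interpolate.

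The $p=2$ endpoint follows directly from $L^2$-orthogonality of $\{f_J\}$ and Plancherel in $x$: since $\|\bU_{R^2} f_J\|_{L^2(\R^2)}^2 = \|\eta(R^{-2}\cdot)\|_{L^2(\R)}^2\|f_J\|_{L^2(\R)}^2\sim R^2\|f_J\|_{L^2}^2$, summing over $J$ gives
\[
\Big\|\big(\sum_J |\bU_{R^2} f_J|^2\big)^{1/2}\Big\|_{L^2(\R^2)}^2 \sim R^2 \|f\|_{L^2(\R)}^2,
\]
which matches the target constant $R^{2/2}$.

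For the $p=4$ endpoint, I would follow the standard C\'ordoba--Mockenhaupt--Seeger--Sogge duality argument (see \cite{Co, MSS}), as executed in this setting by Yung \cite[Proof of Theorem 2]{Yung}. By $L^4$--$L^{4/3}$ vector-valued duality, the $p=4$ case of \eqref{eq0} is equivalent to the adjoint estimate
\[
\Big\|\sum_J P_J (\bU_{R^2})^* g_J\Big\|_{L^{4/3}(\R)} \les R^{1/2} \Big\|\big(\sum_J |g_J|^2\big)^{1/2}\Big\|_{L^{4/3}(\R^2)},
\]
where $P_J$ is the Fourier projection onto $J$ and $(\bU_{R^2})^* g(x) = \int\eta(R^{-2}t)e^{-it\partial_x^2}g(\cdot,t)(x)\,dt$. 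Restricting each $g_J$ to have space-time Fourier support in the canonical block $\theta_J$ at no cost (since $\bU_{R^2} f_J$ already has such support), the Littlewood--Paley equivalence for Fourier-disjoint sums converts the left-hand side into a square function on $\R$, after which the classical parabola square function estimate \eqref{eqn:Fef} applied to $G = \sum_J g_J$ in the space-time variables yields the required bound; the factor $R^{1/2}$ arises from a Cauchy--Schwarz step in the time interval of length $\sim R^2$ supported by $\eta(R^{-2}\cdot)$. Riesz--Thorin interpolation between $p=2$ and $p=4$ then gives \eqref{eq0} for all $2\le p\le 4$.

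The main obstacle is the careful execution of the $p=4$ endpoint: the precise bookkeeping of factors of $R$ through the time integration in $(\bU_{R^2})^*$, the interplay of the frequency projections $P_J$ with the adjoint extension, and the reduction to the classical parabola square function \eqref{eqn:Fef} in space-time must be done carefully so as to produce exactly the factor $R^{1/2}$ rather than a larger power of $R$. These details are standard and spelled out in \cite{Yung}, which I would follow.
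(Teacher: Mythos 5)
Your $p=2$ endpoint and the interpolation framework are fine: the map $f\mapsto(\mathbf U_{R^2}P_Jf)_J$ is linear from $L^p(\R)$ to $L^p(\R^2;\ell^2)$, Plancherel gives the constant $R$ at $p=2$, and vector-valued Riesz--Thorin between $p=2$ and $p=4$ does yield $R^{2/p}$ for intermediate $p$. (The paper does not interpolate; it runs the duality argument for all $2\le p\le 4$ at once, using the $L^q$ Nikodym bound with $q=(p/2)'\in[2,\infty]$, but that is only a cosmetic difference.)

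The genuine gap is in your $p=4$ step: the ingredient you name is the wrong one, and the step as described would fail. The reverse square function estimate \eqref{eqn:Fef} bounds $\|F\|_{L^4}$ \emph{by} the square function for $F$ with Fourier support in $N_{R^{-1}}\cP$; here you need the opposite direction, and moreover the dual functions $g_J\in L^{4/3}(\R^2)$ in your adjoint formulation carry no Fourier support condition near the parabola, so ``\eqref{eqn:Fef} applied to $G=\sum_Jg_J$ in the space-time variables'' is not a meaningful reduction (it is also an $L^4$ statement, not an $L^{4/3}$ one). The ingredient that actually closes the argument --- and the one the paper's proof (following C\'ordoba, Mockenhaupt--Seeger--Sogge, and Yung) is built around --- is the $L^q$ boundedness, $q\ge2$, of the Nikodym-type maximal operator $\mathfrak N g(y)=\sup_{w\in[-1,1]}|T_w|^{-1}\int_{T_w+(y,0)}|g|$ over the tubes $T_w=\{|x+2tw|\le R^{-1},\,|t|\le1\}$. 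Concretely: one shows $|K_J^t|$ is an $L^1$-normalized bump on a tube in direction determined by $c_J$ (via \eqref{decay-ker}), deduces $|\mathbf U_{R^2}f_J|^2\lesssim|K_J^t|*|f_J|^2$ by Cauchy--Schwarz, dualizes $\sum_J|\mathbf U_{R^2}f_J|^2$ against $g\in L^{(p/2)'}$, and bounds the resulting operator $\mathfrak M$ by a rescaled $\mathfrak N$; the factor $R^{2\cdot 2/p}$ comes from the rescaling and the normalization $|T_w|=R^{-1}$, not from a Cauchy--Schwarz in time. Since you defer to Yung's note for the execution, the argument is salvageable, but as written the mechanism you identify for the key estimate is not the one that works.
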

A proof of \Cref{prop:sqfc} is given in \cite{Yung}. We sketch the proof following the note. 

Let $\psi$ be a smooth function supported on $[-1,1]$ such that $\sum_{k\in \Z}\psi(\cdot+k)=1$ on $\R$. Consider the partition of $\R$ by intervals $J$ of length $R^{-1}$ centered at $c_J \in R^{-1} \Z$. 
Let $\psi_J(\xi)=\psi(R(\xi-c_J))$ so that $\sum_{J} \psi_J = 1$. We then decompose $f=\sum_J f_J$, where $\widehat f_J=\widehat f \psi_J$.

Fix $\tilde \psi \in C_c^\infty(\R)$  such that $\tilde \psi \psi=\psi$.
We may write $\mathbf U_{R^2} f_J(x,t)=K_J^t\ast f_J(x)$, where 
\[
K_J^t(x) = (2\pi)^{-1}\eta(R^{-2}t)\int e^{ix\xi+it\xi^2} \tilde \psi(2R(\xi-c_J))\,d\xi.
\]
By changing variables $\xi\rightarrow R^{-1}\xi+c_J$,
we observe that
\[
|K_J^t(x)|=(2\pi R)^{-1}\Big|\eta(R^{-2}t)\int e^{iR^{-1}(x+2tc_J)\xi+iR^{-2}t\xi^2}\tilde\psi(2\xi)\,d\xi\Big|.
\]
Integration by parts yields the decay estimate
\begin{align}\label{decay-ker}
|K_J^t(x)|
\le C_N R^{-1}(1+R^{-1}|x+2tc_J|+R^{-2}|t|)^{-N}, \quad N\ge1,
\end{align}
which, in particular, implies $\|K_J^t\|_{L_x^1} \lesssim 1$.
Consequently, by the Cauchy-Schwarz inequality, we have
$
   |\mathbf U_{R^2} f_J(x,t)|^2\lesssim  |K_J^t|*|f_J|^2(x). 
$
For $q=(p/2)'$, let $g \in L^{q}(\R^2)$ with $\|g\|_{L^{q}}=1$. Thus we obtain
\[ \int \sum_J |\bU_{R^2}f_J(x,t)|^2 g(x,t)\,dxdt \les \int \sum_J |f_J(y)|^2 \mathfrak Mg (y) \,dy, \]
where $\mathfrak Mg$ is defined by
\[
\mathfrak Mg (y) = \sup_J \int |K_J^t(x-y) g(x,t)|\,dxdt, \quad y\in \R.
\]

By duality and H\"older, we find that 
\[ \Big\| \big(\sum_J |\mathbf U_{R^2} f_J|^2\big)^{1/2} \Big\|_{L^p}^{2}
\lesssim
\big\|\big(\sum_{J} |f_J|^2 \big)^{1/2}\big\|_p^2
\,\,\big\| 
\mathfrak M 
\big\|_{L^{q} \to L^q}.\]
By the one-dimensional analogue of \eqref{LP}, for $2\le p \le \infty$,
$
\big\| \big(\sum_J |f_J|^2 \big)^{1/2}\big\|_{L^p(\R)}\lesssim \|f\|_{L^p(\R)}.
$
Thus in order to establish \eqref{eq0}, it remains to prove that for $2\le p \le 4$, 
\begin{align}\label{Mgpp}
\big\|\mathfrak M\big\|_{L^{q} \to L^q}
\less R^{2\cdot\frac 2p}, \;\; q=\Big(\frac p2\Big)'.
\end{align}

To show this, for $w\in [-1,1]$ we set
$
T_w = \{(x,t)\in \R^2: |x+2tw| \le R^{-1}, ~|t|\le 1\},
$
and define the Nikodym maximal function by
\[
\mathfrak Ng(y):= \sup_{w \in [-1,1]} \frac{1}{|T_w|}\int_{T_w+(y,0)} |g(x,t)|\,dxdt, \quad y\in\R.
\]
The Nikodym maximal function satisfies the following bounds; see e.g. \cite{MSS, Yung}.
\begin{prop}
For $2 \le q \le \infty$ and $R\ge1$, we have
\[
\| \mathfrak Ng\|_{L^q(\R)} \less \|g\|_{L^q(\R)}.
\]
\end{prop}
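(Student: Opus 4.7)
The endpoint $q=\infty$ is trivial, since each average in the definition of $\mathfrak N g(y)$ is at most $\|g\|_{L^\infty}$. By Marcinkiewicz interpolation it therefore suffices to prove the strong $L^2\to L^2$ bound $\|\mathfrak N g\|_{L^2(\R)} \less \|g\|_{L^2(\R^2)}$.

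To replace the supremum by something linear I would first discretize the direction parameter. Since $|t|\le 1$ on $T_w$, the center $-2tw$ of the one-dimensional $x$-slice of $T_w$ moves by $O(1)$ as $w$ varies by $R^{-1}$, so one can replace $\sup_{w\in[-1,1]}$ by the maximum over the $R^{-1}$-net $\{w_j=j/R:|j|\le R\}$ at the cost of a constant factor. Choose a measurable selection $j:\R\to\{-R,\dots,R\}$ and set $I_y:=T_{w_{j(y)}}+(y,0)$; then define
\[
T g(y) \;=\; \frac{1}{|I_y|}\int_{I_y}|g|.
\]
It suffices to bound $\|T\|_{L^2(\R^2)\to L^2(\R)}\less 1$ uniformly in this selection.

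I would apply the $TT^*$ method: the operator $TT^*$ has integral kernel
\[
K(y,y') \;=\; \frac{|I_y\cap I_{y'}|}{|I_y|\,|I_{y'}|}.
\]
The geometric input is that $I_y$ and $I_{y'}$ are tubes of width $R^{-1}$ and length $\sim 1$ whose axes cross (or stay within $R^{-1}$) only along a $t$-segment of length $\sim R^{-1}/(|w_{j(y)}-w_{j(y')}|+R^{-1})$, with empty intersection unless $|y-y'|\lesssim|w_{j(y)}-w_{j(y')}|+R^{-1}$. This yields
\[
K(y,y') \;\lesssim\; \frac{1}{|w_{j(y)}-w_{j(y')}|+R^{-1}}
\]
on the support. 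I would then verify Schur's test by decomposing $y'$ dyadically into shells $|w_{j(y)}-w_{j(y')}|\sim 2^k R^{-1}$ for $0\le k\lesssim \log R$: on each shell the support restriction confines $y'$ to a set of measure $\lesssim 2^k R^{-1}$, while $K\lesssim 2^{-k}R$, so each shell contributes $O(1)$. Summing yields $\sup_y\int K(y,y')\,dy'\lesssim \log R$, and symmetrically for the other variable, so $\|TT^*\|_{L^2\to L^2}\lesssim \log R$ and $\|T\|\lesssim (\log R)^{1/2}$, well within the tolerance allowed by $\less$.

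The main obstacle is this Schur estimate: because the ``angular distance'' $|w_{j(y)}-w_{j(y')}|$ depends on the arbitrary measurable selection $j(\cdot)$ and is not a genuine metric on $\R$, one cannot proceed by a change of variables and must instead exploit purely the geometric confinement $|y-y'|\lesssim|w_{j(y)}-w_{j(y')}|+R^{-1}$ to control $y'$ in terms of the angular gap, regardless of how $j$ was chosen. This kind of argument is standard in the Nikodym and Kakeya maximal function literature and appears essentially verbatim in \cite{MSS,Yung}.
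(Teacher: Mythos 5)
Your argument is correct, and it is essentially the standard C\'ordoba-type linearization--$TT^*$--Schur argument for the planar Nikodym maximal function; the paper itself does not prove this proposition but defers to \cite{MSS,Yung}, where this is exactly the argument that appears. The key points all check out: the kernel bound $K(y,y')\lesssim (|w_{j(y)}-w_{j(y')}|+R^{-1})^{-1}$ with support in $\{|y-y'|\lesssim |w_{j(y)}-w_{j(y')}|+R^{-1}\}$ follows from the slice computation $|(y'-y)+2t(w_{j(y)}-w_{j(y')})|\le 2R^{-1}$, the shell-by-shell Schur estimate uses only this support condition (so, as you correctly emphasize, it is insensitive to the arbitrary measurable selection $j$), and the resulting $(\log R)^{1/2}$ loss at $L^2$ is absorbed by the $\lessapprox$ notation after interpolating with the trivial $L^\infty$ bound. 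One small imprecision: in the discretization step the center $-2tw$ of the $x$-slice moves by $O(R^{-1})$ (namely $2|t|\,|\Delta w|\le 2R^{-1}$), not by $O(1)$, as $w$ varies over an interval of length $R^{-1}$; it is precisely this comparison with the tube width $R^{-1}$ that licenses replacing the supremum by the maximum over the $R^{-1}$-net, so the conclusion you draw is the right one even though the stated justification is off by a factor of $R$.
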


In view of \eqref{decay-ker}, the operator $\mathfrak M$ can be dominated by a rescaled version of $\mathfrak N$. Indeed, after the rescaling $y\rightarrow R^2y$ and $(x,t)\rightarrow (R^2x,R^2t)$ and noting $|T_w|=R^{-1}$, we have
\[
\|\mathfrak M\|_{q\rightarrow q} \lesssim R^2 R^{-\frac2q}\|\mathfrak N\|_{q\rightarrow q} \less R^{\frac{2}{q'}} = R^{\frac{4}{p}},
\]
which verifies \eqref{Mgpp}.

\section{Examples for lower bounds}\label{examp}
\newcommand{\Mtheta}{[\mu]_{\theta}}

In this section, we  discuss the lower bounds for the regularity $\zeta$ and $\gamma$ in \Cref{cor:SchFreqLocal}, \Cref{FLSS_par}, and \Cref{FLSS}.

\subsection{Lower bounds for $\zeta$ in \Cref{cor:SchFreqLocal}}\label{sec:global_low}
We show that \eqref{global_LS} holds only if
\begin{align}\label{gamma0}
\zeta \ge \zeta(\alpha,p):=\max\Big(\frac12-\frac1p, ~\min\Big(\frac{\alpha}{2p}, \frac{2\alpha-1}{2p}\Big)\Big).
\end{align}

\textit{$(i)$ Proof of $\zeta\ge \frac12-\frac1p$.}
To show this, we use an example from \cite{Rogers}. Let $\psi\in C_c^\infty([1/4,4])$ be such that $\psi=1$ on $[1/2,2]$. We take $\widehat f(\xi)=e^{-iR\xi^2}\psi(\xi)$ so that
\[
f(x)=(2\pi)^{-1}\int e^{ix\xi-iR\xi^2}\psi(\xi)\,d\xi.
\]
By integration by parts, we obtain $|f(x)|\lesssim (R+|x|)^{-N}$ for $|x|\ge 10R$
while the stationary phase method gives $|f(x)|\lesssim R^{-1/2}$ for $|x|\le 10R$. Hence 
$\|f\|_{L^p} \lesssim R^{-\frac12+\frac1p}.$

Note that
\[
|\mathbf U_Rf(x)|\ge
(2\pi)^{-1}\Big|\int e^{ix\xi+i(t-R)\xi^2}\psi(\xi)\,d\xi\Big|.
\]
In particular, one has $|\mathbf U_Rf(x)|\gtrsim 1$ on the set 
\[ F=\{(x,t)\in \R^2:|x|\le c,~|t-R|\le c\}	 \]
for some small constant $c>0$. Thus
\begin{align*}
\mu(F)^{\frac 1p}\les \big\| \mathbf U_Rf \big\|_{L^p(\R\times [0,R],\mu)}
\les \inn{\mu}_\alpha^{\frac1p} R^{\zeta}R^{-\frac12+\frac{1}{p}}
\end{align*}
follows. 
If we take $\mu=1_F(x,t)dxdt$, then $\inn{\mu}_\alpha \le1$ for any $\alpha \in [0,2]$. Since $\mu(F)\sim1$, we obtain $\zeta \ge \frac12-\frac1p$ as desired.

\textit{$(ii)$ Proof of $\zeta\ge \min\big(\frac{\alpha}{2p}, \frac{2\alpha-1}{2p}\big)$.}
For a smooth function $\psi$ as above, choose $g$ such that $\widehat{g}(\xi)=\psi(R^{\frac12}(\xi+1))$.
Then $\|g\|_{L^p} \lesssim R^{-\frac12+\frac1{2p}}$.

By the change of variable $\xi\rightarrow R^{-\frac12}\xi-1$, ignoring the extra oscillatory factor independent of $\xi$, we obtain
\[
\big|\mathbf U_Rg(x)\big|=(2\pi)^{-1}\Big|\int e^{ix\xi+it\xi^2}\psi(R^{\frac12}(\xi+1))\,d\xi\Big|
=(2\pi)^{-1} R^{-\frac12}\Big|\int e^{iR^{-1/2}(x-2t)\xi+itR^{-1}\xi^2} \psi(\xi)\,d\xi\Big|.
\]
Thus $|\mathbf U_Rg(x)|\gtrsim R^{-\frac12}$ for $(x,t)\in G$ where 
\begin{align}\label{setG}
G=\{(x,t): |x-2t|\le cR^{1/2}, ~ |t|\le R\}	
\end{align}
for a small constant $c>0$. Thus
\begin{align}\label{GGG}
R^{-\frac12}\mu(G)^{\frac1p}\lesssim
\big\|e^{it\partial_x^2}g\big\|_{L^p(\R\times [0,R],\mu)}
\lesssim \inn{\mu}_\alpha^{1/p} R^{\zeta-\frac12+\frac1{2p}}.
\end{align}

For a set $G$ given in \eqref{setG}, cover $G$ by union of $O(R^{\frac12})$ disjoint balls of radius $R^{\frac12}$. When $\alpha\in[1,2]$ we obtain
\begin{align*}
|G\cap B_\rho(z)|
\lesssim \begin{cases}
\rho^2 \le R^{\frac{2-\alpha}2}\rho^\alpha, \quad~ & \rho \in [1,R^{1/2}],\\[.8ex]
R^{\frac12}\rho \le R^{\frac{2-\alpha}2}\rho^\alpha, \quad~ & \rho \in [R^{1/2},\infty).
\end{cases}
\end{align*}
When $\alpha \in [0,1]$, we have
\[
|G\cap B_\rho(z)|
\lesssim
\begin{cases}
\rho^2 \le R^{\frac{2-\alpha}2}\rho^\alpha \le R^{\frac32-\alpha}\rho^\alpha, \quad~ & \rho \in [1,R],\\[.8ex]
R^{\frac12}\rho \le R^{\frac32-\alpha}\rho^{\alpha}, \quad~ & \rho \in [R^{1/2},R],\\[.8ex]
R^{\frac32} \le R^{\frac32-\alpha}\rho^{\alpha}, \quad~ & \rho \in [R,\infty).
\end{cases}
\]
We now take $\mu=\min\big(R^{\frac{\alpha-2}2},R^{\alpha-\frac32}\big)1_G(x,t)dxdt$. Then $\inn{\mu}_\alpha\le1$ follows.
Since $|G|\sim R^{\frac32}$, we have $\mu(G)\sim \min\big(R^{\frac{\alpha+1}2},R^{\alpha}\big)$.
Thus \eqref{GGG} yields the lower bounds $\zeta \ge \min\big(\frac{\alpha}{2p}, \frac{2\alpha-1}{2p}\big)$.

\subsection{Lower bounds for \Cref{FLSS_par}}
To get lower bounds for $\gamma$ in \Cref{FLSS_par}, we rescale the estimates in \Cref{sec:global_low} by following the argument in \Cref{sec:rescale}.

Suppose
\begin{equation}\label{fls-theta}
  \big\| e^{it\partial_x^2}f \big\|_{L^p(\R\times[0,1],\,d\mu)}
  \;\le\; C\, [\mu]_{\beta,\text{par}}^{1/p}\, R^\gamma\|f\|_{L^p(\R)}
\end{equation}
holds for all positive measure $\mu$ satisfying $[\mu]_{\beta,\text{par}} \le1$
where $\widehat {f}$ is Fourier supported on $[R/2,R]$.
For a given $f$, we set $f_R =f(R^{-1}\cdot)$ so that $\widehat {f_R}$ is supported on $[1/2,1]$ and $\|f_R\|_p =R^{\frac1{p}}\|f\|_{p}$.
Similarly as before in \Cref{sec:rescale}, we define a positive measure $\mu_R$ by
$
\mu_R(E) = \int 1_E(Rx,R^2 t) d\mu(x,t)
$
for any $E\subset \R^2$.
Then by \eqref{eqn:reductionFLS}, we have
\[
\big\|e^{it\partial_x^2}f\big\|_{L^p(\R\times [0,1],\mu)}=
\big\|\mathbf U_{R^2}f_R\big\|_{L^p(\R\times [0,R^2],\mu_R)}.
\]

Next, for a given positive Borel measure $\mu$ satisfying $[\mu]_{\beta,\text{par}}\le1$, \Cref{lem:measure} provides
\[
\inn{R^\beta\mu_R}_{\theta} \les 1
\]
where $\theta=\frac{\beta+1}2$ when $\beta\in[1,3]$, and $\theta=\beta$ when $\beta\in[0,1]$.
By applying \eqref{fls-theta}, we have
\begin{align}\label{hRh}
\big\|\mathbf U_{R^2} f_R\big\|_{L^p(\R\times [0,R^2],R^\beta\mu_R)}
\lesssim R^{\frac{\beta}p+\gamma}\|f\|_{L^p}
=R^{\frac{\beta}p+\gamma-\frac{1}{p}}\|f_R\|_{L^p}. 
\end{align}

By the discussion in \Cref{sec:global_low}, we prove that if \eqref{hRh} holds,
then $\frac \beta p +\gamma -\frac 1p\ge2\zeta(\theta,p)$ which is defined by \eqref{gamma0}. Equivalently, \eqref{hRh} holds only if
\begin{align*}
\gamma \ge 
\begin{cases}
2\max\big(\frac12-\frac1p, ~\frac{\theta}{2p}\big) +\frac{1-\beta}p, \quad~ &\theta\in[1,2],\\[1ex]
2\max\big(\frac12-\frac1p, ~\frac{2\theta-1}{2p}\big) +\frac{1-\beta}p, \quad~&\theta \in [0,1]
\end{cases}
\end{align*}
where $\theta=\frac{\beta+1}2$ when $\beta\in[1,3]$, and $\theta=\beta$ when $\beta\in[0,1]$.
Hence,
\[
  \gamma\ge
  \begin{cases}
  \max\!\big\{1-\frac{\beta+1}{p},\, \frac{3-\beta}{2p}\big\}, & \beta\in[1,3],\\[1ex]
\max\!\big\{1-\frac{\beta+1}{p},\,\frac\beta p\}, & \beta\in[0,1].
  \end{cases}
\]

\subsection{Proof of \Cref{FLSS}}
We now discuss the lower bounds for $\gamma$ of \Cref{FLSS}.
Suppose
\begin{equation}\label{fls-theta2}
  \big\| e^{it\partial_x^2}f \big\|_{L^p(\R\times[0,1],\,d\mu)}
  \;\le\; C\, [\mu]_{\alpha}^{1/p}\, R^\gamma\|f\|_{L^p(\R)}
\end{equation}
holds. Let $\mu$ be a positive measure satisfying $[\mu]_\alpha \le1$ for $\alpha\in[0,2]$.
By \Cref{lem:measure}, we have
\[
\inn{R^\theta \mu_R}_\alpha \lesssim 1
\]
for $\theta=2\alpha-1$ (when $\alpha\in[1,2]$) and $\theta=\alpha$ (when $\alpha \in [0,1]$).
As before, if \eqref{fls-theta2} holds, then
\begin{align}\label{hRh2}
\big\|\mathbf U_{R^2} f_R\big\|_{L^p(\R\times [0,R^2],R^\theta\mu_R)}
\lesssim R^{\frac{\theta}p+\gamma}\|f\|_{L^p}
=R^{\frac{\theta}p+\gamma-\frac{1}{p}}\|f_R\|_{L^p}. 
\end{align}
By the discussion in \Cref{sec:global_low}, \eqref{hRh2} holds only if $\frac \theta p+\gamma-\frac1p\ge 2\zeta(\alpha,p)$. Equivalently, we have
\begin{align*}
\gamma \ge 
\begin{cases}
2\max\big(\frac12-\frac1p, ~\frac{\alpha}{2p}\big) +\frac{1-(2\alpha-1)}p, \quad~ &\theta\in[1,2],\\[1ex]
2\max\big(\frac12-\frac1p, ~\frac{2\alpha-1}{2p}\big) +\frac{1-\alpha}p, \quad~&\theta \in [0,1].
\end{cases}
\end{align*}
This yields
\[
  \gamma\ge
  \begin{cases}
  \max\!\big(1-\frac{2\alpha}{p},\,\frac{2-\alpha}{p}\big), & \alpha\in[1,2],\\[.8ex]
\max\!\big(1-\frac{\alpha+1}{p},\,\frac{\alpha}{p}\big), & \alpha\in[0,1].
  \end{cases}
\]

\appendix

\section{Proof of \Cref{ptwise bound}}\label{sec:pt}
In this section, we prove \Cref{ptwise bound}, motivated by the argument in \cite{BG}.
The following elementary lemma will be used repeatedly.

\begin{lemma}\label{lem:BG} Let $\{ a_i \}_{i\in I}$ be a sequence of non-negative real number indexed by a finite set $I$. For each $i\in I$, let $I_i \subset I$ be a subset containing $i$ such that $|I_i| \leq C_1$ for all $i\in I$ for some 
constant $C_1 \in \N$. Then there exists $C=C(C_1,p)$ such that for $p\ge1$,
\[ 
\Big(\sum_{i\in I} a_i\Big)^p \leq C\Big( \max_{i\in I} a_i^p + (\# I )^p\max_{\substack{i\in I,\\ j\notin I_i}} a_i^{\frac p2} a_j^{\frac p2}\Big). \]
\end{lemma}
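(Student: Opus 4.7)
The plan is to pick an index $i^* \in I$ achieving $a_{i^*} = \max_{i\in I} a_i$ and split the sum into a ``peak'' piece indexed by $I_{i^*}$ and a ``tail'' piece indexed by its complement:
\[
\sum_{i \in I} a_i \;=\; \sum_{i \in I_{i^*}} a_i \,+\, \sum_{j \in I \setminus I_{i^*}} a_j \;\le\; C_1\, a_{i^*} \,+\, (\#I)\,\max_{j \notin I_{i^*}} a_j.
\]
Here the first piece uses the hypothesis $|I_{i^*}|\le C_1$, while the second is a trivial count. Raising to the $p$-th power via $(x+y)^p \le 2^{p-1}(x^p+y^p)$ gives
\[
\Big(\sum_{i \in I} a_i\Big)^p \;\le\; 2^{p-1}\Big( C_1^p\, a_{i^*}^p + (\#I)^p \max_{j \notin I_{i^*}} a_j^p \Big).
\]

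The first term on the right is already controlled by $C_1^p \max_i a_i^p$, which matches the first term of the claim. For the second term, the plan is to exploit the maximality of $a_{i^*}$: for any $j\notin I_{i^*}$, the bound $a_j \le a_{i^*}$ implies $a_j^p \le a_{i^*}^{p/2} a_j^{p/2}$. Since the pair $(i,j) = (i^*, j)$ satisfies $j \notin I_i$, this upgrades to
\[
\max_{j \notin I_{i^*}} a_j^p \;\le\; \max_{\substack{i\in I, \\ j \notin I_i}} a_i^{p/2} a_j^{p/2},
\]
which is precisely the bilinear quantity appearing on the right-hand side of the lemma. Setting $C := 2^{p-1}\max(C_1^p,1)$ then yields the claim.

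The only point of care is the degenerate case $I \subset I_{i^*}$, in which the set $\{j \in I : j \notin I_{i^*}\}$ is empty and the bilinear maximum is vacuous. In that case $\#I \le C_1$ forces $\sum_i a_i \le C_1\, a_{i^*}$ directly, so the first term on the right of the statement alone suffices. Beyond this bookkeeping, there is no substantive obstacle here: the argument is the standard Bourgain--Guth ``peak plus tail'' split, with the mild twist that $a_j^p$ must be converted into $a_i^{p/2} a_j^{p/2}$ using $a_j \le a_{i^*}$ in order to match the form of the right-hand side.
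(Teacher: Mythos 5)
Your argument is correct and is essentially the paper's proof: both split the sum into the $\le C_1$ terms indexed by $I_{i^*}$ (for $i^*$ maximizing $a_i$) and the remaining terms, bound the latter by $(\#I)$ times the largest $a_j$ with $j\notin I_{i^*}$, and convert that term into the bilinear quantity via $a_j\le a_{i^*}$. Your explicit treatment of the degenerate case $I=I_{i^*}$ is a harmless extra bit of bookkeeping that the paper leaves implicit.
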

\begin{proof} 
Let $*, **\in I$ be the indices for which $a_* = \max_{i\in I} a_i$ and $a_{**}=\max_{j\notin I_{*}} a_{j}$. Then, for any $i\notin I_{*}$ we have $a_i \leq a_{*}^{1/2} a_{**}^{1/2}$. Therefore,
\begin{align*}
\sum_{i \in I} a_i &= \sum_{i\in I_{*}} a_i + \sum_{i\notin I_{*}} a_i \leq C_1 a_{*} + (\# I) a_{*}^{1/2} a_{**}^{1/2}.
\end{align*}
Taking $p$-th power and using the inequality $(x+y)^p\les_p x^p+y^p$ for $x,y>0$, we obtain the desired bound.
\end{proof}

Now we prove \Cref{ptwise bound}.

\begin{proof}[Proof of \Cref{ptwise bound}]
Recall that $K\sim \log R$ is a dyadic number and $m$ is the integer such that 
\[1\le K\le \dots \le K^m \leq R^{1/2} < K^{m+1}.\]
Let $\mathcal T_0 =\{ \tau_0\}$ denote collection of the unit cube covering the parabola $\mathcal P$.

At the first stage, we decompose $\tau_0$ into a collection $\mathcal T_1=\mathcal T_1(\tau_0)$ of $K^{-1}\times K^{-2}$ boxes $\tau_1$ covering the $K^{-2}$-nbd of the parabola.
For each $\tau_1\in \mathcal T_1$,  let $\mathcal T_2(\tau_1)$ be a  collection of $K^{-2}\times K^{-4}$ boxes $\tau_2 \subset \tau_1$ covering $K^{-4}$-nbd of $\mathcal P$, and define $\mathcal T_2 = \cup_{\tau_1\in \mathcal T_1} \mathcal T_2(\tau_1)$.

Proceeding inductively, for $2\le j \le m$, we define $\mathcal T_j(\tau_{j-1})$ as the collection of boxes $\tau_j \subset \tau_{j-1}$ of dimension $K^{-j}\times K^{-2j}$ covering $K^{-2j}$-neighborhood of $\mathcal P$ and $\mathcal T_j$ similarly.
Finally, denote $\mathcal T_m=\{\theta\}$ and for each $j=1,\dots,m$ set
\[f_{\tau_j} = \sum_{\theta \subset \tau_j} f_\theta.\]

For each $\tau_1\in \mathcal T_1$, let 
\[
\mathcal N_1(\tau_1)=\{\tau_1'\in \mathcal T_1:~ \tau_1' \cap 2\tau_1 \neq \emptyset\}.
\]
It is clear that $\mathcal N_1(\tau_1)$ consists of only $O(1)$ many elements, and if $\tau_1' \notin \mathcal N_1(\tau_1)$ then $d(\tau_1,\tau_1')\ge 1/K$.
Since $\mathcal T_1$ is covered by such neighborhood $\mathcal N_1(\tau_1)$,
applying \Cref{lem:BG} to $|f|^p\leq (\sum_{\tau_1\in \mathcal T_1} |f_{\tau_1}|)^p$, and using that $\# \mathcal T_1 \les K$, we have
 \[ |f(x)|^p 
 \leq C\max_{\tau_1 \in \mathcal T_1} |f_{\tau_1}(x)|^p + CK^p \max_{\substack{\tau_1,\tau_1' \in \mathcal T_1; \\ d(\tau_1,\tau_1')\ge K^{-1} }} |f_{\tau_1}(x)|^{\frac p2} |f_{\tau_1'}(x)|^{\frac p2}   \]
for some absolute constant $C$.

Applying \Cref{lem:BG} again to the first term $|f_{\tau_1}|^p=|\sum_{\tau_2 \in \mathcal T_2(\tau_1)}f_{\tau_2}|^p$, we get 
\begin{align*}
  |f(x)|^p &\leq C^2 \max_{\tau_1} \max_{\tau_2\in \mathcal T_2(\tau_1)} |f_{\tau_2}(x)|^p + C^2K^p \max_{\tau_1} \max_{\substack{\tau_2,\tau_2' \in \mathcal T_2(\tau_1);\\ d(\tau_2,\tau_2')\ge K^{-2} }} |f_{\tau_2}(x)|^{\frac p2} |f_{\tau_2'}(x)|^{\frac p2} \\ &\qquad \quad+  CK^p \max_{\substack{\tau_1,\tau_1' \in \mathcal T_1;\\ d(\tau_1,\tau_1')\ge K^{-1} }} |f_{\tau_1}(x)|^{\frac p2} |f_{\tau_1'}(x)|^{\frac p2}.
\end{align*}
Continuing in this manner,  we get 
\begin{equation*}
	\begin{split}
  |f(x)|^p &\leq C^m \max_{\tau_m\in \mathcal{T}_m} |f_{\tau_m}(x)|^p + C^mK^p \sum_{j=1}^m \sum_{\tau_{j-1} \in \mathcal T_{j-1}} \max_{\substack{\tau_j,\tau_j' \in \mathcal T_j(\tau_{j-1});\\ d(\tau_j,\tau_j')\ge K^{-j} }} |f_{\tau_j}(x)|^{\frac p2} |f_{\tau_j'}(x)|^{\frac p2} \\
   &\less \sum_{|\theta|=R^{-1/2}} |f_{\theta}(x)|^{p} +  \sum_{j=1}^m \sum_{\tau_{j-1} \in \mathcal T_{j-1}} \sum_{\substack{\tau_j,\tau_j' \in \mathcal T_j(\tau_{j-1});\\ d(\tau_j,\tau_j')\ge K^{-j}  }} |f_{\tau_j}(x)|^{\frac p2} |f_{\tau_j'}(x)|^{\frac p2},
\end{split}
\end{equation*}
where we have used $C^m K^{O(1)} \less 1$ together with the fact that each $\tau_m \in \mathcal{T}_m$ contains $O(K)$ elements $\theta$ of size $|\theta| = R^{-1/2}$.
This completes the proof of Lemma \ref{ptwise bound}.
\end{proof}

\subsection*{Acknowledgement} 

J. Kim  was supported in part by grants from the Research Grants Council of the Hong Kong Administrative Region, China (Project No. CityU 21309222 and CityU 11308924). H. Ko was supported in part by the NRF (Republic of Korea) grant RS-2024-00339824, G-LAMP RS-2024-00443714,  JBNU research funds for newly appointed professors in 2024, and NRF2022R1I1A1A01055527. 
We thank the organizers of the Special Topic School “Maximal Operators and Applications” and the Hausdorff Center for Mathematics, where our collaboration began.
We also thank Sanghyuk Lee  for helpful discussions. The second author would like to thank Seheon Ham  for previous collaboration related to this topic. 
We are grateful to Tony Carbery for bringing the references \cite{CarberySeeger, CarberySlide} to our attention and for pointing out the overlap between their results and the case $p=2$ of \Cref{cor:sqweighted}.

\end{document}